\numberwithin{equation}{section}
\def\3bar{{|\hspace{-.02in}|\hspace{-.02in}|}}
\def\E{{\mathcal{E}}}
\def\T{{\mathcal{T}}}
\def\dQ{{\mathbb{Q}}}
\def\b0{\boldsymbol{0}}
\def\sumT{\sum_{T\in\mathcal{T}_h}}     
\def\bw{{\mathbf{w}}}
\def\bn{{\mathbf{n}}}
\newtheorem{example}{Example}[section]
\newtheorem{remark}{Remark}[section]
\newtheorem{algorithm1}{Weak Galerkin Algorithm}
 \newcommand{\eps}{\varepsilon}
 \newcommand{\Real}{\mathbb{R}}
 \newcommand{\trb}[1]{|\!|\!|#1|\!|\!|}
 \newcommand{\trbone}[1]{|\!|\!|#1|\!|\!|_1}
\newcommand{\comm}[1]{{\color{red}#1}}
\begin{document}

\title{The Weak Galerkin Finite Element Method For The Steklov Eigenvalue Problem}

\author{
	Shusheng Li\thanks{School of Mathematics, Jilin University, Changchun, 130012, China (ssli22@mails.jlu.edu.cn).}
	\and
	Hehu Xie\thanks{ICMSEC, LSEC, NCMIS, Academy of Mathematics and Systems Science, Chinese Academy of Sciences, Beijing 100190, China, and School of Mathematical Sciences, University of Chinese Academy of Sciences, Beijing 100049, China (hhxie@lsec.cc.ac.cn).}
	\and 
	Qilong Zhai\thanks{School of Mathematics, Jilin University, Changchun, 130012, China (zhaiql@jlu.edu.cn).}
}

\maketitle
\begin{abstract}
This paper introduces a weak Galerkin (WG) finite element method to solve the Steklov eigenvalue problems, focusing on obtaining lower bounds of the eigenvalues. Compared with the existing work, our method can provide asymptotic lower bound approximations of eigenvalues with arbitrary high order convergence and no additional conditions are required. Moreover, through adjustments in stabilizer coefficients, we obtain guaranteed lower bound analysis for the weak Galerkin finite element method with arbitrary high degree. Numerical results demonstrate the accuracy and lower bound property of the numerical scheme.
\end{abstract}

\begin{keywords}
weak Galerkin finite element method, Steklov eigenvalue problem, high order, asymptotic lower bound, guaranteed lower bound. 
\end{keywords}

\begin{AMS}
65N15, 65N25, 65N30
\end{AMS}

\section{Introduction}
The Steklov eigenvalue problems with eigenvalue parameter appearing at the boundary have received extensive attention and have been applied to the fields of mathematics and physics, for example, stability of mechanical oscillators immersed in a viscous fluid \cite{MR1652238}, surface waves \cite{MR0054140}, the vibration modes of a structure in contact with an incompressible fluid \cite{MR1804656}, vibrations of a pendulum \cite{MR613954}, the antiplane shearing on a system of collinear faults under slip-dependent friction law \cite{MR2279256}, eigenoscillations of mechanical systems with boundary
conditions containing frequency \cite{MR1086767}, among many others.

Several research papers have explored various numerical techniques for approximating the eigenvalues and eigenfunctions associated with the Steklov eigenvalue problems. These studies can be summarized as follows. Bramble and Osborn \cite{1972Approximation} delved into the use of the conforming finite element methods to solve the Steklov eigenvalue problems. Andreev and Todorov \cite{MR2046179} focused their researches on finite element methods tailored to approximating the Steklov eigenvalue problems for second-order, self-adjoint, elliptic differential operators. Furthermore, Yang and Bi \cite{MR3202019} made contributions by introducing local $a$ $priori$ and $a$ $posteriori$ error estimates for the conforming finite element approximations of the Steklov eigenvalue problems. These estimates served as valuable tools for assessing the accuracy of numerical solutions. In \cite{MR4497827}, Wang et al. analyzed the use of the conforming virtual element methods for non-self-adjoint Steklov eigenvalue problems. They also provided an $a$ $priori$ error estimate for discrete eigenvalues and eigenfunctions. 

The reference to the minimum-maximum principle highlights a limitation of the conforming finite element methods, as they can only provide upper bounds of the eigenvalues. This limitation underscores the importance of obtaining lower bounds of the eigenvalues. Yang et al. \cite{MR2553141} studied the nonconforming finite element methods for the Steklov eigenvalue problems on convex domains. They specifically looked into methods for obtaining lower bounds of eigenvalues using different finite element types, such as the Crouzeix–Raviart element, the $Q_1^{rot}$ element and the $EQ_1^{rot}$ element. Their findings highlighted that the $EQ_1^{rot}$ element can get asymptotic lower bounds of all exact eigenvalues, while the Crouzeix–Raviart element and the $Q_1^{rot}$ element can only get lower bounds of the large eigenvalues, e.g., the upper bounds of the smallest eigenvalue can be obtained by these two methods. Furthermore, Li et al. \cite{MR3034819} studied the nonconforming finite element methods for the Steklov eigenvalue problems on the convex and concave domains. They introduced a novel element, the ECR element, and conducted  rigorous analysis of asymptotic lower bound approximations for the Steklov eigenvalues problems on the general domains. To provide rigorous lower eigenvalue  bounds  for  the  Steklov eigenvalue problems, You et al. \cite{MR3961991} introduced an enhanced version of an eigenvalue estimate algorithm designed to handle positive semidefinite bilinear forms. This algorithm extended the applicability of existing methods tailored for positive definite bilinear forms. In addition, Zhang et al. \cite{MR4218599} introduced a corrective approach to the Crouzeix-Raviart finite element eigenvalue approximations. They managed to obtain asymptotic lower bounds of eigenvalues for the Steklov eigenvalue problems featuring variable coefficients. Crucially, their work eliminated the need for specific conditions, such as singular eigenfunctions and large eigenvalues, thus enhancing the versatility. Unfortunately, the above work only considered lower bounds of eigenvalues with lower order convergence.

The main goal of this paper is to use the weak Galerkin (WG) finite element method to solve the Steklov eigenvalue problems and obtain lower bounds of eigenvalues. The WG method, originally proposed in \cite{MR2994424}, has been successfully applied to second order elliptic problems \cite{MR3325251, MR3767812, MR3223326}, Stokes equations \cite{MR4399131, MR3452926, MR3475512, MR4042721, MR3426142}, Helmholtz equations \cite{MR3407260, MR3784709}, biharmonic equations \cite{MR3366088}, Maxwell equations \cite{MR3394450}, Navier-Stokes equations \cite{MR3869651} and Stokes-Darcy problems \cite{MR4316143}. The WG method uses the special weak differential operators instead of the traditional differential operators and uses discontinuous piecewise polynomials as basis functions on polygonal or polyhedral finite element partitions. Therefore, the WG method can flexibly solve partial differential equations. 

In \cite{MR3919912}, Zhai et al. proposed the WG method for elliptic eigenvalue problems. A general framework was proposed and applied to elliptic eigenvalue problems. As long as (A1)-(A7) in \cite{MR3919912} are proved, the eigenvalues obtained by the WG method are asymptotic lower bounds of the exact eigenvalues. Subsequently, Carstensen et al. \cite{2020A} proposed a skeletal finite element method that can compute lower eigenvalue bounds. They rigorously demonstrated the existence of guaranteed lower eigenvalue bounds (GLB) for the Laplacian eigenvalue problems and conducted an analysis of the lowest-order case. The main contributions of this paper can be summarized as follows. Firstly, we use the WG method to solve the Steklov eigenvalue problems for the first time and extend the general framework in \cite{MR3919912} and the GLB in \cite{2020A} to the Steklov eigenvalue problems. Secondly, our method can obtain asymptotic lower bounds of the exact eigenvalues with arbitrary high order convergence, by adjusting the coefficients of stabilizer, we obtain the guaranteed lower bound analysis for the WG method with arbitrary high degree.

This paper is organized as follows. In Section 2, we introduce the WG scheme for the Steklov eigenvalue problems. In Section 3, we get error estimates of eigenvalues and eigenfunctions, then the asymptotic lower bounds with arbitrary high order convergence are obtained. The error estimates in $H^1$ and $L^2$ norms for the source problems corresponding to the Steklov eigenvalue problems are given in Section 4. In Section 5, we prove the GLB theorem which gives the conditions for guaranteed lower eigenvalue bounds. Numerical experiments are presented to demonstrate the accuracy and lower bound property of the WG scheme in Section 6.

\section{The weak Galerkin finite element method}
In this section, we propose the WG scheme for the Steklov eigenvalue problems and give the general notations.
\subsection{The Steklov eigenvalue problem}
In this paper, for simplicity, we consider the following equation:
\begin{equation}
\left\{
\begin{aligned}
-\Delta{u}+u &= 0,& &\quad \text{in }\Omega,&\\
\frac{\partial u}{\partial \mathbf{n}} &= \lambda{u},& &\quad \text{on }\partial\Omega,&
\end{aligned}
\right.
\label{2.1}
\end{equation}
where $\Omega$ is a bounded polygonal  or polyhedral domain in $\mathbb{R}^d$ ($d = 2, 3$), $\frac{\partial}{\partial \mathbf{n}}$ is the outward normal derivative of $\partial\Omega$.

The corresponding variational form of the equation (\ref{2.1}) is: Find $\lambda\in \mathbb{R}$ and $u \in H^1(\Omega)$ such that $\Vert u \Vert_{b}=1$ and
\begin{equation}
a(u,v)=\lambda b(u,v), ~~~\forall v\in H^{1}(\Omega),
\label{2.2}
\end{equation}
where 
\begin{align*}
&a(u,v)=\int_{\Omega}(\nabla u \nabla v+uv)dx,  \\
&b(u,v)=\int_{\partial \Omega}uvd\sigma,~~\Vert u \Vert_{b}=b(u,u)^{1/2},
\end{align*}
where $\sigma$ is the measure on $\partial \Omega$.

From \cite{MR0054140,1972Approximation}, the Steklov eigenvalue problem (\ref{2.1}) has an eigenvalue sequence $\{\lambda_{i}\}$:
$$0< \lambda_{1}\leq \lambda_{2} \leq \cdots \leq\lambda_{i}\leq \cdots,~~\lim\limits_{i \to \infty}\lambda_{i}=\infty,$$
with the corresponding eigenfunction sequence 
$$u_{1},u_{2},\cdots,u_{i},\cdots,$$ where $b(u_{i},u_{j})=\delta_{ij}.$

In this paper, we adopt the standard Sobolev space notation. Consider any open bounded domain $D$ with a Lipschitz continuous boundary in $\mathbb{R}^d$ ($d = 2, 3$). We denote the seminorm on this domain as $|\cdot|_{D}$. The norm and inner product are represented as $\|\cdot\|_D$ and
$(\cdot,\cdot)_D$, respectively. When $D=\Omega$, we will omit the subscript $D$. 

Consider $\T_h$ as a partition of the domain $\Omega$, where the cells in $\T_h$ are either polygons in two dimensions or polyhedra in three dimensions satisfying the regular assumptions specified in \cite{MR3223326}. We denote the set of all edges or faces of $T$ as $\mathcal{E}(T)$. The set of all edges or faces in  $\T_h$ is represented by $\mathcal{E}_h$, and $\E_h^0$ stands for the set of all interior edges or faces.
For each cell $T$ in $\T_h$, $h_T$ represents its diameter, and $h$ is the maximum diameter among all cells in $\T_h$. Let $C$ be a positive constant which is independent of the mesh size. We use the notation $a\lesssim b$ to denote $a\leq Cb$.

\subsection{The weak Galerkin scheme}
First we introduce the WG space for the Steklov eigenvalue problem (\ref{2.1}).
For any integer $k\ge 1$, we define
$$V_{h}=\{v=\{v_{0},v_{b}\}:v_{0}|_{T}\in P_{k}(T),v_{b}|_{e}\in P_{k}(e),\forall T\in \mathcal{T}_{h},\forall e\in\mathcal{E}_{h}\},$$
where $P_{k}(T)$ denotes the space of ploynomials whose degree is not more than $k$ on $T$, $P_{k}(e)$ denotes the space of polynomials whose degree is not more than $k$ on $e$.

For each cell $T\in \mathcal{T}_{h}$, we introduce the $L^{2}$ projection operator $Q_{0}$, which projects from $L^{2}(T)$ onto $P_{k}(T)$. Similarly, we denote $Q_{b}$ as the $L^{2}$ projection operator, but this one projects from $L^{2}(e)$ onto $P_{k}(e)$ for each edge/face $e \in \mathcal{E}_{h}$. Additionally, for each cell $T\in \mathcal{T}_{h}$, we define $\mathbb{Q}_{h}$ as the $L^{2}$ projection operator from $[L^{2}(T)]^d$ onto $[P_{k-1}(T)]^d$. We define $Q_{h} = \{Q_{0},Q_{b}\}$ to denote the projection operator from $H^1(\Omega)$ onto $V_{h}$.

\begin{definition}
For each $v_h\in V_h$, its weak gradient is defined as the following polynomial  $\nabla_w v_h|_T \in [P_{k-1}(T)]^d$ satisfying
\begin{eqnarray}\label{2.3}
(\nabla_w v_h,\bw)_T=-(v_0,\nabla\cdot\bw)_T+\langle v_b,\bw\cdot\bn
\rangle_{\partial T},\quad\forall\bw\in [P_{k-1}(T)]^d,
\end{eqnarray}
where $\bn$ denotes the outward unit normal vector.
\end{definition}

For any $v_{h},w_{h}\in V_h$, we define three bilinear forms on $V_h$
\begin{align*}
&s(v_{h},w_{h})=\gamma(h)\sumT h_T^{-1}\langle v_0-v_b,
w_0-w_b\rangle_{\partial T},\\
&a_w(v_{h},w_{h})=(\nabla_w v_{h},\nabla_w w_{h})+(v_{0},w_{0})+s(v_{h},w_{h}),\\
&b_w(v_{h},w_{h})=\langle v_{b},w_{b}\rangle_{\partial \Omega},
\end{align*}
where $\gamma(h)$ satisfies $\gamma(h) \leq 1$ and $\gamma(h) \rightarrow 0$ as $h \rightarrow 0$. The selection of $\gamma(h)$ is not unique, such as $\gamma(h)=h^{\varepsilon}$ or $-\frac{1}{\log(h)}$, where $\eps$ is a positive constant satisfying $0< \eps<1/4$. 

Next we give the WG algorithm of (\ref{2.1}).

\begin{algorithm1}
Find $(\lambda_h, u_h)\in\Real\times V_h$ such that $\|u_h\|_b=1$ and
\begin{eqnarray}\label{2.4}
a_w(u_h,v_h)=\lambda_h b_w(u_h,v_h),\quad\forall v_h\in V_h.
\end{eqnarray}
\end{algorithm1}

\section{Asymptotic lower bound}
In this section, we give some technical tools that are used later and the error estimates for the Steklov eigenvalue problem (\ref{2.2}). Finally, we obtain asymptotic lower bound approximations of the exact eigenvalues with arbitrary high order convergence.

\subsection{Technical tools}
Next we introduce several inequalities used in this paper, and the proofs can be found in \cite{MR3223326}.
\begin{lemma}
(Trace inequality). For each cell $T \in \mathcal{T}_{h}$ and each edge $e\subset \partial T$, we have
$$\Vert g \Vert_{e}^{2}\leq C(h_{T}^{-1}\Vert g \Vert_{T}^{2}+h_{T}\Vert \nabla g \Vert_{T}^{2}),\quad \forall g \in H^{1}(T).$$
\end{lemma}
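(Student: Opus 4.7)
The plan is to establish the inequality via a standard scaling argument that reduces it to the classical trace theorem on a reference cell. Since the cells in $\T_h$ are polygonal/polyhedral rather than simplicial, I would first invoke the shape-regularity assumption borrowed from \cite{MR3223326}, which endows each $T \in \T_h$ with a sub-triangulation into shape-regular simplices whose diameters are comparable to $h_T$. It therefore suffices to prove the estimate on one such simplex and then sum the resulting bounds over the sub-simplices meeting the edge/face $e$.

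On a reference simplex $\hat T$ of unit diameter, the classical trace inequality
\[
\|\hat g\|_{\hat e}^2 \le C\bigl(\|\hat g\|_{\hat T}^2 + \|\hat\nabla \hat g\|_{\hat T}^2\bigr), \qquad \forall \hat g \in H^1(\hat T),
\]
holds by continuity of the trace operator from $H^1(\hat T)$ into $H^{1/2}(\partial \hat T)$. I would then introduce the affine diffeomorphism $F_T:\hat T \to T$ and set $\hat g(\hat x) = g(F_T(\hat x))$. The standard change-of-variable formulas, together with the bounds $|F_T'|\sim h_T$ and $|(F_T')^{-1}|\sim h_T^{-1}$ provided by shape-regularity, yield the equivalences
\[
\|g\|_T^2 \sim h_T^{d}\|\hat g\|_{\hat T}^2, \quad \|\nabla g\|_T^2 \sim h_T^{d-2}\|\hat\nabla \hat g\|_{\hat T}^2, \quad \|g\|_e^2 \sim h_T^{d-1}\|\hat g\|_{\hat e}^2,
\]
with constants depending only on the shape-regularity parameters. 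Substituting these into the reference inequality and multiplying through by $h_T^{d-1}$ delivers exactly $\|g\|_e^2 \le C\bigl(h_T^{-1}\|g\|_T^2 + h_T\|\nabla g\|_T^2\bigr)$.

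The main obstacle is the polygonal/polyhedral generality of the mesh, since the affine scaling trick is tailored to simplices and there is no canonical reference polygon or polyhedron of arbitrary topology. The resolution, already embedded in the regular partition hypothesis of \cite{MR3223326}, is the uniformly shape-regular sub-triangulation, which reduces the general case to finitely many simplicial pieces with controlled constants. As an independent verification I would outline the divergence-theorem proof: choose a vector field $\mathbf{b}$ on $T$ with $\mathbf{b}\cdot\bn \ge c > 0$ on $e$, $\|\mathbf{b}\|_\infty \lesssim 1$, and $\|\nabla\cdot\mathbf{b}\|_\infty \lesssim h_T^{-1}$; then
\[
\int_e g^2\,d\sigma \le \frac{1}{c}\int_{\partial T} g^2(\mathbf{b}\cdot\bn)\,d\sigma = \frac{1}{c}\int_T \nabla\cdot(g^2\mathbf{b})\,dx,
\]
and a Cauchy--Schwarz/Young application produces the same bound without invoking a reference configuration.
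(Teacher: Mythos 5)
Your proof is correct. The paper itself offers no argument for this lemma---it simply states that ``the proofs can be found in \cite{MR3223326}''---so there is no in-paper proof to compare against; the cited reference establishes the trace inequality on shape-regular polytopal elements by essentially the route you describe, namely reducing to shape-regular simplices guaranteed by the regularity assumptions and then either scaling to a reference simplex or integrating a divergence identity. Your scaling computation is accurate: the factors $h_T^{d}$, $h_T^{d-2}$, $h_T^{d-1}$ combine to give exactly the weights $h_T^{-1}$ and $h_T$ in the statement. One small caution on your ``independent verification'': the step $\int_e g^2\,d\sigma \le \frac{1}{c}\int_{\partial T} g^2(\mathbf{b}\cdot\mathbf{n})\,d\sigma$ requires not only $\mathbf{b}\cdot\mathbf{n}\ge c$ on $e$ but also $\mathbf{b}\cdot\mathbf{n}\ge 0$ on the remainder of $\partial T$, since otherwise the discarded boundary terms could be negative in the wrong direction; on a simplex this is arranged by the standard choice $\mathbf{b}(x)=(x-a_e)/h_T$ with $a_e$ the vertex opposite $e$, for which $\mathbf{b}\cdot\mathbf{n}$ vanishes on the other faces. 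With that choice made explicit, both of your arguments are complete and carry constants depending only on the shape-regularity parameters, which is what the WG analysis in the rest of the paper needs.
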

\begin{lemma}
(Inverse inequality). Let $\varphi$ be a piecewise polynomial on  cell $T \in \mathcal{T}_{h}$, then we have
$$\Vert \nabla \varphi \Vert_{T}\leq Ch_{T}^{-1}\Vert \varphi \Vert_{T}.$$
\end{lemma}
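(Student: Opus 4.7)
The plan is to establish the bound by the classical scaling argument: pass from $T$ to a reference configuration of unit diameter, exploit norm equivalence on a finite-dimensional polynomial space, and then scale back, using the mesh's shape-regularity assumption to keep the constant uniform.

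First I would rescale. Set $\tilde{T}=h_T^{-1}T$, so that $\tilde{T}$ has unit diameter, and define $\tilde{\varphi}(\tilde{x})=\varphi(h_T\tilde{x})$ for $\tilde{x}\in\tilde{T}$. A direct change of variables then yields the identities
\begin{equation*}
\|\varphi\|_T^2 = h_T^{d}\|\tilde{\varphi}\|_{\tilde{T}}^2,
\qquad
\|\nabla\varphi\|_T^2 = h_T^{d-2}\|\tilde{\nabla}\tilde{\varphi}\|_{\tilde{T}}^2,
\end{equation*}
where $\tilde{\nabla}$ denotes differentiation in the $\tilde{x}$ variable. Thus the desired inequality is equivalent to the statement $\|\tilde{\nabla}\tilde{\varphi}\|_{\tilde{T}}\le C\|\tilde{\varphi}\|_{\tilde{T}}$, with a constant independent of $h_T$.

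Next I would appeal to norm equivalence on finite-dimensional polynomial spaces. Since $\tilde{\varphi}$ restricted to $\tilde{T}$ (or to each sub-piece of $\tilde{T}$ if $\varphi$ is only piecewise polynomial) lies in a finite-dimensional space, and since the $L^2$-norm on a set of positive measure is a genuine norm on this space, all seminorms are controlled by the $L^2$-norm. This gives the bound $\|\tilde{\nabla}\tilde{\varphi}\|_{\tilde{T}}\le C(\tilde{T})\|\tilde{\varphi}\|_{\tilde{T}}$ with a constant depending only on the shape of $\tilde{T}$ and on the polynomial degree.

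The main obstacle is uniformity of $C(\tilde{T})$ over the family $\{\T_h\}$, because $T$ is a general polygon or polyhedron rather than the image of a single reference element under an affine map. Here I would invoke the shape-regularity assumptions from \cite{MR3223326}: one uses the fact that $\tilde{T}$ contains an inscribed ball and is contained in a ball whose radii ratio is uniformly bounded, together with the finite number of shape classes this implies, to conclude that $C(\tilde{T})$ is bounded by a constant $C$ depending only on the shape regularity and on $k$. Combining this uniform bound with the scaling identities above yields
\begin{equation*}
\|\nabla\varphi\|_T = h_T^{(d-2)/2}\|\tilde{\nabla}\tilde{\varphi}\|_{\tilde{T}}
\le C\, h_T^{(d-2)/2}\|\tilde{\varphi}\|_{\tilde{T}}
= C\, h_T^{-1}\|\varphi\|_T,
\end{equation*}
which is the claimed inverse inequality.
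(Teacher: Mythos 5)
The paper does not actually prove this lemma: the trace, inverse, and projection inequalities are stated as standard tools and their proofs are deferred to \cite{MR3223326}, so the only meaningful comparison is with the argument in that reference, which is indeed the scaling-plus-norm-equivalence argument you outline. Your two change-of-variables identities are correct, and the finite-dimensional norm-equivalence step on the rescaled element is the right idea.

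The one step that does not hold as written is your justification of the uniformity of $C(\tilde T)$. Shape regularity of a polygonal or polyhedral partition does \emph{not} imply that the rescaled elements $\tilde T=h_T^{-1}T$ fall into finitely many shape classes: even after normalizing to unit diameter there is a continuum of admissible polytopes (varying numbers of faces, varying angles), so you cannot take a maximum of $C(\tilde T)$ over a finite set. This is precisely the point where the polytopal setting differs from the affine-equivalent simplicial one, and it is the actual content of the appendix of \cite{MR3223326}. The standard repair uses the inscribed-ball property contained in the shape-regularity assumptions: there are balls $B(x_0,\rho)\subset \tilde T\subset B(x_0,2)$ with $\rho>0$ depending only on the regularity constants, and since $\|\cdot\|_{L^2(B(x_0,\rho))}$ is a norm on the finite-dimensional space of polynomials of degree at most $k$, one gets $\|\tilde\nabla\tilde\varphi\|_{\tilde T}\le\|\tilde\nabla\tilde\varphi\|_{B(x_0,2)}\le C(k,d,\rho)\,\|\tilde\varphi\|_{B(x_0,\rho)}\le C(k,d,\rho)\,\|\tilde\varphi\|_{\tilde T}$, with a constant depending only on $k$, $d$ and $\rho$ rather than on the particular shape of $\tilde T$. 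With that substitution your argument is complete and matches the cited proof; as it stands, the uniformity claim is asserted on false grounds.
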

\begin{lemma}
(Projection inequality). For any $\phi \in H^{k+1}(\Omega)$, we have
\begin{align*}
&\sum\limits_{T \in \mathcal{T}_{h}}\Vert \phi-Q_{0}\phi \Vert_{T}^{2}+\sum\limits_{T \in \mathcal{T}_{h}}h_{T}^{2}\Vert \nabla(\phi-Q_{0}\phi) \Vert_{T}^{2} \leq Ch^{2(k+1)}\Vert \phi \Vert_{k+1}^{2},\\
&\sum\limits_{T \in \mathcal{T}_{h}}\Vert \nabla \phi-\mathbb{Q}_{h}(\nabla \phi) \Vert_{T}^{2}+\sum\limits_{T \in \mathcal{T}_{h}}h_{T}^{2}\Vert \nabla(\nabla \phi-\mathbb{Q}_{h}(\nabla \phi)) \Vert_{T}^{2} \leq Ch^{2k}\Vert \phi \Vert_{k+1}^{2},\\
&\sum\limits_{T \in \mathcal{T}_{h}}\Vert \phi-Q_{b}\phi\Vert^{2}_{\partial T}\leq Ch^{2k+1}\Vert \phi\Vert_{k+1}^{2}.
\end{align*}
\end{lemma}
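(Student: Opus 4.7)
The plan is to prove each of the three estimates by the standard combination of the Bramble--Hilbert lemma with an affine (or quasi-affine) scaling argument to a reference cell, together with the trace inequality of Lemma~3.1. Throughout, I would work on one element $T\in\mathcal{T}_{h}$ at a time and then sum over the partition, exploiting the polygonal/polyhedral shape-regularity assumption (from MR3223326) to obtain a constant independent of $h_T$.

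First, for the scalar projection estimates in line one, I would fix $T$, map it to a reference cell $\hat T$ of unit diameter by an affine transformation, and use the defining property that $Q_{0}$ preserves $P_{k}(T)$. The linear operator $\hat\phi\mapsto\hat\phi-\widehat{Q_{0}\phi}$ is bounded on $H^{k+1}(\hat T)$ and vanishes on $P_{k}(\hat T)$, so Bramble--Hilbert gives $\|\hat\phi-\widehat{Q_{0}\phi}\|_{\hat T}\lesssim|\hat\phi|_{k+1,\hat T}$. Undoing the scaling produces a factor $h_T^{k+1}$ for the $L^2$-norm part and $h_T^{k}$ for the gradient part (the extra $h_T^2$ in the statement exactly compensates this one-order loss). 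Summing over $T\in\mathcal{T}_{h}$ and bounding $h_T\leq h$ yields the required $Ch^{2(k+1)}\|\phi\|_{k+1}^{2}$.

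Second, for the vector projection $\mathbb{Q}_{h}$ acting on $\nabla\phi\in [H^{k}(T)]^d$, I would apply the identical reference-cell argument componentwise, but now with polynomial order $k-1$, so Bramble--Hilbert yields the rate $h_T^{k}$ in $L^2$ and $h_T^{k-1}$ in $H^1$; the $h_T^2$ weight in the statement aligns both contributions to the uniform power $h^{2k}\|\phi\|_{k+1}^{2}$. Third, for the boundary estimate, I would exploit the best-approximation property of $Q_{b}$ on each edge/face $e\subset\partial T$ to write $\|\phi-Q_{b}\phi\|_{e}\leq\|\phi-Q_{0}\phi\|_{e}$, then feed the right-hand side into the trace inequality from Lemma~3.1, getting
\[
\|\phi-Q_{0}\phi\|_{e}^{2}\lesssim h_T^{-1}\|\phi-Q_{0}\phi\|_{T}^{2}+h_T\|\nabla(\phi-Q_{0}\phi)\|_{T}^{2}.
\]
Substituting the first estimate on the right, summing over $e\subset\partial T$ and then over $T$, yields the claimed order $h^{2k+1}\|\phi\|_{k+1}^{2}$.

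The only real technical obstacle is the shape-regularity framework: because the cells are general polygons/polyhedra rather than simplices, one cannot appeal to a single reference element. Instead, the shape-regularity assumption of MR3223326 requires each cell to admit a uniformly quasi-uniform decomposition into shape-regular simplices, so that the Bramble--Hilbert constants remain uniform across the admissible class of element shapes; once this uniformity is secured, the three bounds follow by the mechanical combination above with careful bookkeeping of the powers of $h_T$.
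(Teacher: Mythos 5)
Your proof is correct and, for the third inequality (the only one the paper actually proves rather than cites from MR3223326), it follows exactly the paper's argument: bound $\Vert \phi-Q_{b}\phi\Vert_{e}$ by $\Vert \phi-Q_{0}\phi\Vert_{e}$ via best approximation, apply the trace inequality, and invoke the first projection estimate. Your Bramble--Hilbert/scaling sketch for the first two estimates is the standard route taken in the cited reference, so the whole proposal is essentially the paper's approach.
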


\begin{proof}
The proofs of the first two inequalities can be found in \cite{MR3223326}, we only give the proof of the third one. From the trace inequality and the first projection inequality, we have
\begin{align*}
    \sum\limits_{T \in \mathcal{T}_{h}}\Vert \phi-Q_{b}\phi\Vert^{2}_{\partial T}&\leq \sum\limits_{T \in \mathcal{T}_{h}}\Vert \phi-Q_{0}\phi\Vert^{2}_{\partial T}  \\
    &\leq \sum\limits_{T \in \mathcal{T}_{h}}C(h_{T}^{-1}\Vert \phi-Q_{0}\phi\Vert^{2}_{T}+h_{T}\Vert \nabla(\phi-Q_{0}\phi)\Vert^{2}_{T}) \\
    &\leq Ch^{2k+1}\Vert \phi\Vert_{k+1}^{2},
\end{align*}
which completes the proof.
\end{proof}

The following commutativity property plays a crucial role in the theoretical analysis, and the proof can be found in \cite[Lemma 5.1]{MR3325251}.

\begin{lemma}
\label{lemma2.2}
For any cell \:$T\in \mathcal{T}_{h}$\:, we have
$$\nabla_{w}(Q_{h}\phi)=\mathbb{Q}_{h}(\nabla\phi), \quad \forall \phi \in H^{1}(T).$$
\end{lemma}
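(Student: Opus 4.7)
The plan is to verify the identity directly from the defining relation of the weak gradient in (\ref{2.3}) and an integration by parts. Since both $\nabla_w(Q_h\phi)$ and $\mathbb{Q}_h(\nabla\phi)$ live in $[P_{k-1}(T)]^d$, it suffices to show they agree in the $L^2(T)$ pairing against every test $\bw\in[P_{k-1}(T)]^d$.

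First, I would substitute $v_h=Q_h\phi=\{Q_0\phi,Q_b\phi\}$ into (\ref{2.3}) to get
\begin{equation*}
(\nabla_w(Q_h\phi),\bw)_T=-(Q_0\phi,\nabla\cdot\bw)_T+\langle Q_b\phi,\bw\cdot\bn\rangle_{\partial T},\qquad\forall\bw\in[P_{k-1}(T)]^d.
\end{equation*}
The next step is to remove the projections $Q_0$ and $Q_b$ from this expression. Since $\bw\in[P_{k-1}(T)]^d$, the divergence $\nabla\cdot\bw$ lies in $P_{k-1}(T)\subset P_k(T)$, so by the definition of $Q_0$ as the $L^2$-projection onto $P_k(T)$ we have $(Q_0\phi,\nabla\cdot\bw)_T=(\phi,\nabla\cdot\bw)_T$. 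On each edge/face $e\subset\partial T$ the normal $\bn$ is constant (the mesh consists of polygons/polyhedra with flat faces), so $\bw\cdot\bn|_e\in P_{k-1}(e)\subset P_k(e)$, and the definition of $Q_b$ yields $\langle Q_b\phi,\bw\cdot\bn\rangle_e=\langle\phi,\bw\cdot\bn\rangle_e$.

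Summing over the boundary and applying standard integration by parts on $T$ (valid for $\phi\in H^1(T)$ and $\bw$ smooth on $T$), I obtain
\begin{equation*}
(\nabla_w(Q_h\phi),\bw)_T=-(\phi,\nabla\cdot\bw)_T+\langle\phi,\bw\cdot\bn\rangle_{\partial T}=(\nabla\phi,\bw)_T.
\end{equation*}
By the definition of $\mathbb{Q}_h$ as the $L^2$-projection onto $[P_{k-1}(T)]^d$, the right-hand side equals $(\mathbb{Q}_h(\nabla\phi),\bw)_T$. Hence $(\nabla_w(Q_h\phi)-\mathbb{Q}_h(\nabla\phi),\bw)_T=0$ for all $\bw\in[P_{k-1}(T)]^d$. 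Choosing $\bw=\nabla_w(Q_h\phi)-\mathbb{Q}_h(\nabla\phi)$ (which is itself in $[P_{k-1}(T)]^d$) gives the desired identity.

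No step is a real obstacle; the only subtlety worth stating carefully is the justification of the two projection-removal identities, in particular the observation that the polynomial degree of $\bw\cdot\bn$ on each face is at most $k-1$ so that $Q_b$ is exact against it. This is where the assumption of flat faces of the polygonal/polyhedral partition enters, and it is the only place the proof uses more than the abstract definitions of $Q_h$, $\mathbb{Q}_h$, and $\nabla_w$.
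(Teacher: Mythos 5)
Your proof is correct and is exactly the standard argument: the paper itself does not reproduce a proof but defers to \cite[Lemma 5.1]{MR3325251}, and the argument given there proceeds precisely as you do, by substituting $Q_h\phi$ into the definition of $\nabla_w$, using the exactness of $Q_0$ and $Q_b$ against $\nabla\cdot\bw$ and $\bw\cdot\bn$, and integrating by parts. Your remark about the flat faces guaranteeing $\bw\cdot\bn|_e\in P_{k-1}(e)$ correctly identifies the one place where the geometry of the partition enters.
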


\subsection{Error estimate}

Let $V_c=H^1(\Omega)$, and $V=V_c+V_h$.
For any $u \in V$, we define the following norm
\begin{eqnarray*}
\|u\|_V^2= \sumT \Big(\|\nabla u_0\|_T^2+\|u_0\|_T^2+ h_T^{-1}\|u_0-u_b\|^2_{\partial T}\Big).
\end{eqnarray*}
Here $u_{0}$ represents the value of $u$ on the interior of each cell $T$ in the partition $\mathcal{T}_{h}$, while $u_{b}$ represents the value of $u$ on each edge or face $e \in \mathcal{E}_{h}$.

Next we verify the (A1)-(A7) in Section 2 of \cite{MR3919912} to prove lower bound property of the eigenvalues.

First we verify (A1):
$a(\cdot,\cdot)$ and $a_w(\cdot,\cdot)$ are symmetric, and
\begin{align*}
    &a(u,u)\geq \gamma_c\Vert u\Vert_V^2,\quad \forall u\in V_c,\\
    &a_w(u_h,u_h)\geq \gamma(h)\Vert u_h\Vert_V,\quad \forall u_h \in V_h,
\end{align*}
where $\gamma_c$ and $\gamma(h)$ are two positive constants for given $h$.

It is easy to verify that $a(\cdot,\cdot)$ is symmetric, coercive and continuous, and $a_{w}(\cdot,\cdot)$ is symmetric and continuous. Therefore, we only need to prove that $a_{w}(\cdot,\cdot)$ is coercive.

\begin{lemma}
\label{lemma2.3}
For any \:$u_{h} \in V_{h}$\:, we have
$$a_{w}(u_{h},u_{h})\gtrsim \gamma(h)\Vert u_{h} \Vert_{V}^{2}.$$
\end{lemma}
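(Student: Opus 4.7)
The plan is to expand $a_w(u_h,u_h)$ into its three pieces
\[
a_w(u_h,u_h)=\|\nabla_w u_h\|^2+\|u_0\|^2+\gamma(h)\sumT h_T^{-1}\|u_0-u_b\|_{\partial T}^2,
\]
and compare term by term with the three contributions in $\|u_h\|_V^2$. Two of the three comparisons are essentially free: since $\gamma(h)\le 1$, the $\|u_0\|^2$ term already dominates $\gamma(h)\|u_0\|^2$, and the stabilizer piece is identically $\gamma(h)\sumT h_T^{-1}\|u_0-u_b\|_{\partial T}^2$. So the entire content of the lemma is to control $\gamma(h)\sumT\|\nabla u_0\|_T^2$ by $\|\nabla_w u_h\|^2$ plus (a multiple of) the stabilizer.

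For that comparison, the key identity comes from the definition of the weak gradient combined with elementwise integration by parts. Because $v_0\in P_k(T)$, its classical gradient $\nabla v_0$ lies in $[P_{k-1}(T)]^d$, so $\mathbb{Q}_h(\nabla v_0)=\nabla v_0$. Using the weak gradient definition and IBP, for every $\bw\in[P_{k-1}(T)]^d$
\[
(\nabla_w v_h-\nabla v_0,\bw)_T=\langle v_b-v_0,\bw\cdot\bn\rangle_{\partial T}.
\]
Choosing $\bw=\nabla_w v_h-\nabla v_0$ and applying the trace and inverse inequalities from Lemmas~2.1--2.2 to the polynomial $\bw$ yields the local control
\[
\|\nabla_w v_h-\nabla v_0\|_T^2\lesssim h_T^{-1}\|v_0-v_b\|_{\partial T}^2,
\]
and hence, by the triangle inequality,
\[
\|\nabla v_0\|_T^2\lesssim \|\nabla_w v_h\|_T^2+h_T^{-1}\|v_0-v_b\|_{\partial T}^2.
\]

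Summing over $T\in\T_h$, multiplying by $\gamma(h)$, and using $\gamma(h)\le 1$ on the weak-gradient term gives
\[
\gamma(h)\sumT\|\nabla u_0\|_T^2\lesssim \|\nabla_w u_h\|^2+s(u_h,u_h).
\]
Adding this to $\|u_0\|^2\ge\gamma(h)\|u_0\|^2$ and $s(u_h,u_h)=\gamma(h)\sumT h_T^{-1}\|u_0-u_b\|_{\partial T}^2$ yields $a_w(u_h,u_h)\gtrsim\gamma(h)\|u_h\|_V^2$. The main obstacle is the weak gradient/classical gradient comparison; once that inequality is established, the rest is bookkeeping, and it is exactly here that the fact $\nabla v_0\in[P_{k-1}(T)]^d$ (so that $\mathbb{Q}_h$ acts as the identity on it) is indispensable for avoiding any projection error and ensuring the right-hand side is controlled purely by the jump $v_0-v_b$.
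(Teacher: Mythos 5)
Your proposal is correct and follows essentially the same route as the paper: the paper's proof simply invokes the analogue of \cite[Lemma 3.2]{MR3919912} to assert the three bounds $\sumT\Vert\nabla u_0\Vert_T^2\lesssim\gamma(h)^{-1}a_w(u_h,u_h)$, $\sumT h_T^{-1}\Vert u_0-u_b\Vert_{\partial T}^2\lesssim\gamma(h)^{-1}a_w(u_h,u_h)$ and $\sumT\Vert u_0\Vert_T^2\leq\gamma(h)^{-1}a_w(u_h,u_h)$, and your weak-gradient/classical-gradient comparison via the identity $(\nabla_w v_h-\nabla v_0,\bw)_T=\langle v_b-v_0,\bw\cdot\bn\rangle_{\partial T}$ together with the trace and inverse inequalities is exactly the standard argument behind the first (and only nontrivial) of those bounds. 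Your version is merely more self-contained than the paper's citation-based proof.
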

\begin{proof}
Similar to \cite[Lemma 3.2]{MR3919912}, we can obtain
\begin{align*}
&\sum\limits_{T\in \mathcal{T}_{h}}\Vert \nabla u_{0} \Vert_{T}^{2}\leq C\gamma(h)^{-1}a_{w}(u_{h},u_{h}),\\
&\sum\limits_{T\in \mathcal{T}_{h}}h_{T}^{-1}\Vert u_{0}-u_{b} \Vert_{\partial T}^{2}\leq C\gamma(h)^{-1}a_{w}(u_{h},u_{h}),\\
&\sum\limits_{T\in \mathcal{T}_{h}}\Vert u_{0} \Vert_{T}^{2}\leq \gamma(h)^{-1}a_{w}(u_{h},u_{h}).
\end{align*}
Combining the above three inequalities, we have
$$a_{w}(u_{h},u_{h})\gtrsim \gamma(h)\Vert u_{h} \Vert_{V}^{2},$$
which completes the proof.
\end{proof}

We define two operators $K, K_h$ as follows
\begin{align*}
&K:~V_{c}\rightarrow V_{c},~~a(Kf,v)=b(f,v), ~~~\forall v\in V_{c},\\
&K_{h}:~V_{h}\rightarrow V_{h},~~a_{w}(K_{h}f_{h},v_{h})=b_{w}(f_{h},v_{h}), ~~~\forall v_{h}\in V_{h}.
\end{align*}
It is easy to prove that $K$ and $K_{h}$ are well-defined by the Lax-Milgram theorem.

Next we verify (A2): $K, K_h$ are compact.

\begin{lemma}
\label{lemma2.4}
$K,K_{h}$ are compact operators.
\end{lemma}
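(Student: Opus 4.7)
The plan is to treat the two operators separately, since the compactness of $K_h$ is essentially trivial while the compactness of $K$ rests on a standard trace-compactness argument.

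For the continuous operator $K : H^1(\Omega) \to H^1(\Omega)$, I would factor it as the composition of three linear maps. First, the trace map $\gamma : H^1(\Omega) \to L^2(\partial\Omega)$ factors through $H^{1/2}(\partial\Omega)$: the trace $H^1(\Omega) \to H^{1/2}(\partial\Omega)$ is continuous, and the embedding $H^{1/2}(\partial\Omega) \hookrightarrow L^2(\partial\Omega)$ is compact by the standard Rellich-type result, so $\gamma$ is compact from $H^1(\Omega)$ into $L^2(\partial\Omega)$. Second, the linear functional $v \mapsto b(f, v) = \langle \gamma f, \gamma v\rangle_{\partial\Omega}$ is bounded on $V_c$ by Cauchy-Schwarz and the trace inequality, so by the Lax-Milgram theorem applied to the coercive continuous bilinear form $a(\cdot,\cdot)$ on $V_c$, there is a unique solution $Kf \in V_c$ depending linearly and continuously on the datum $\gamma f \in L^2(\partial\Omega)$. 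Thus $K$ factors as
\begin{equation*}
H^1(\Omega) \xrightarrow{\gamma} L^2(\partial\Omega) \xrightarrow{S} H^1(\Omega),
\end{equation*}
where $\gamma$ is compact and $S$ is continuous, giving compactness of $K$.

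For the discrete operator $K_h : V_h \to V_h$, I would simply note that for fixed mesh $\mathcal{T}_h$ the space $V_h$ is finite-dimensional. Lemma~\ref{lemma2.3} shows that $a_w(\cdot,\cdot)$ is coercive on $V_h$, so $K_h$ is well-defined by the Lax-Milgram theorem (already mentioned) and is in particular a bounded linear operator on the finite-dimensional space $V_h$. Every bounded linear operator on a finite-dimensional normed space is compact, so $K_h$ is compact.

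I do not expect any real obstacle. The only subtlety is to make sure the trace map enters with the right regularity: one must use the compact embedding $H^{1/2}(\partial\Omega) \hookrightarrow L^2(\partial\Omega)$ rather than merely the boundedness of the trace, because the target of $K$ is $H^1(\Omega)$ and one needs to convert continuity of $S$ into compactness of $K$ somewhere. This is handled by placing the compact step in the trace rather than in the solve, as above.
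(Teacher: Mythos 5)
Your argument is correct, but for the operator $K$ it takes a genuinely different route from the paper. The paper proves compactness of $K$ via elliptic regularity: it invokes the a priori bound $\Vert Kf\Vert_{1+r}\le C\Vert f\Vert_{\frac12,\partial\Omega}\le C\Vert f\Vert_1$ with $1/2<r\le 1$, so that $K$ maps $H^1(\Omega)$ boundedly into $H^{1+r}(\Omega)$, and then uses the compact embedding $H^{1+r}(\Omega)\hookrightarrow H^1(\Omega)$. You instead place the compactness in the data map rather than in the solution map: you factor $K=S\circ\gamma$ with $\gamma:H^1(\Omega)\to L^2(\partial\Omega)$ compact (via the compact embedding $H^{1/2}(\partial\Omega)\hookrightarrow L^2(\partial\Omega)$) and $S:L^2(\partial\Omega)\to H^1(\Omega)$ merely bounded by Lax--Milgram. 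Both factorizations are valid; yours has the advantage of requiring no regularity theory beyond the trace theorem, and so is insensitive to the value of $r$ (which for the paper depends on the geometry of $\Omega$), while the paper's choice is natural here because the same regularity estimate is needed anyway in the duality argument of Section 4 and in Lemma \ref{lemma2.6}. For $K_h$ your argument and the paper's are essentially the same in substance --- finite rank plus boundedness --- though the paper spells out the boundedness $\Vert K_hf_h\Vert_V\lesssim\gamma(h)^{-1}\Vert f_h\Vert_V$ through the coercivity of $a_w$, a discrete trace bound $\Vert v\Vert_b\lesssim\trbone{v}$, and the norm equivalence $C_1\Vert v\Vert_V\le\trbone{v}\le C_2\Vert v\Vert_V$, whereas you may simply observe that every linear operator on a finite-dimensional space is bounded; either way the conclusion is immediate.
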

\begin{proof}
From \cite[Proposition 4.4]{MR1804656} and \cite[(4.10)]{1972Approximation}, we have $\Vert Kf \Vert_{1+r}\leq C\Vert f \Vert_{\frac{1}{2}, \partial \Omega}$, where $1/2<r \leq 1$. It follows from the trace inequality that $\Vert f\Vert_{\frac{1}{2}, \partial \Omega}\leq C\Vert f\Vert_{1}$. And $H^{1+r}(\Omega)$ is compact embedded into $H^{1}(\Omega)$, so $K$ is a compact operator. For the operator $K_{h}$, since it is a finite rank operator, it is only necessary to prove that it is a bounded linear operator. From Lemma \ref{lemma2.3}, we have
$$\gamma(h)\Vert K_{h}f_{h} \Vert_{V}^{2}\lesssim a_{w}(K_{h}f_{h},K_{h}f_{h})=b_{w}(f_{h},K_{h}f_{h}).$$
It follows from \cite[Lemma 3.11]{MR4567823} that
\begin{align*}
&b_{w}(K_{h}f_{h},f_{h})\leq \Vert K_{h}f_{h} \Vert_{b} \Vert f_{h} \Vert_{b} \leq C  \trbone{K_{h}f_{h}} \trbone{f_{h}},
\end{align*}
where 
$$\trbone{v}=\sum\limits_{T\in \mathcal{T}_{h}}\Vert \nabla_{w}v \Vert^2_{T}+\sum\limits_{T\in \mathcal{T}_{h}}\Vert v_{0}\Vert^2_{T}+\sum\limits_{T\in \mathcal{T}_{h}}h_{T}^{-1}\Vert v_{0}-v_{b} \Vert^2_{\partial T}, ~\forall v \in V_{h}.$$
According to \cite[Lemma 5.3]{MR3325251}, there exist two positive constants $C_1$ and $C_2$ such that the following inequality holds
\begin{align*}
&C_{1}\Vert v \Vert_{V}\leq \trbone{v} \leq C_{2}\Vert v \Vert_{V}, ~~~\forall v \in V_{h}.
\end{align*}
Thus, we have $\Vert K_{h}f_{h} \Vert_{V}\leq C\Vert f_{h} \Vert_{V}$, so \:$K_{h}$\; is a bounded linear operator. Then we complete the proof.
\end{proof}

Next we verify (A3): there exists a bounded linear operator $Q_h: V\rightarrow V_h$ satisfying
\begin{align*}
    &Q_h v_h=v_h,\quad \forall v_h\in V_h,\\
    &b(Q_h w,v_h)=b(w,v_h),\quad \forall w\in V,v_h\in V_h.
\end{align*}

Obviously, $Q_{h}v_{h}=v_{h}, \forall v_{h}\in V_{h}$, and
$$b(w,v_{h})=\langle w_{b},v_{b} \rangle_{\partial\Omega}=\langle Q_{b}w_{b},v_{b} \rangle_{\partial\Omega}=b(Q_{h}w,v_{h}).$$
Then (A3) is verified.

Let $X$ be the $L^{2}(\partial\Omega)$ space. Denote
\begin{align*}
&\delta_{h,\mu}=\sup\limits_{v\in R(E_{\mu}(K)),\atop \Vert v \Vert_{V}=1}\Vert v-Q_{h}v \Vert_{V},\\
&\delta_{h,\mu}'=\sup\limits_{v\in R(E_{\mu}(K)),\atop \Vert v \Vert_{V}=1}\Vert v-Q_{h}v \Vert_{X}.
\end{align*}
The definition of $R(E_{\mu}(K))$ can be found in Section 2 of \cite{MR3919912}. It is the eigenspace corresponding to the eigenvalue $\mu$.

Denote
\begin{align*}
&e_{h,\mu}=\Vert (K-K_{h}Q_{h})|_{R(E_{\mu}(K)} \Vert_{V},  \\
&e_{h,\mu}'=\Vert (\Pi_{0}K-\Pi_{0}K_{h}Q_{h})|_{R(E_{\mu}(K))} \Vert_{X},
\end{align*}
where $\Pi_{0}$ is the projection operator from $V$ onto $X$ under the inner product $b(\cdot,\cdot)$.

Next we verify (A4) and (A5):
\begin{align*}
    &(A4): e_{h,\mu}\rightarrow 0 ~{\rm as} ~h \rightarrow 0, {\rm and} ~\delta_{h,\mu}\gamma(h)^{-1}\rightarrow 0~{\rm as} ~h \rightarrow 0,\\
    &(A5): e_{h,\mu}'\rightarrow 0 ~{\rm as} ~h \rightarrow 0, {\rm and} ~\delta_{h,\mu}'\gamma(h)^{-1}\rightarrow 0~{\rm as} ~h \rightarrow 0.
\end{align*}

\begin{lemma}
\label{lemma2.5}
Assume $R(E_{\mu}(K))\subset H^{k+1}(\Omega)$, we can establish the following estimates
\begin{align*}
&\delta_{h,\mu}\lesssim h^{k},\\
&\delta_{h,\mu}'\lesssim h^{k+\frac{1}{2}}.
\end{align*}
\end{lemma}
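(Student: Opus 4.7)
The plan is to take an arbitrary $v \in R(E_\mu(K))$ with $\|v\|_V = 1$ and directly estimate $\|v - Q_h v\|_V$ and $\|v - Q_h v\|_X$ by summing contributions cell by cell and invoking the tools of Section 3.1. The assumption $R(E_\mu(K)) \subset H^{k+1}(\Omega)$ plus compactness of $K$ (Lemma \ref{lemma2.4}) makes $R(E_\mu(K))$ a finite-dimensional subspace of $H^{k+1}(\Omega)$. On $V_c$ the $V$-norm coincides with the $H^1$-norm (the jump terms vanish), so all norms are equivalent on $R(E_\mu(K))$, and $\|v\|_V = 1$ yields $\|v\|_{k+1} \lesssim 1$. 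Hence both bounds will reduce to showing the relevant interpolation estimate with the right power of $h$ times $\|v\|_{k+1}$.

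For the $V$-norm estimate, I would expand
\begin{equation*}
\|v - Q_h v\|_V^2 = \sum_{T\in\mathcal{T}_h} \Bigl(\|\nabla(v-Q_0 v)\|_T^2 + \|v-Q_0 v\|_T^2 + h_T^{-1}\|Q_b v - Q_0 v\|_{\partial T}^2\Bigr),
\end{equation*}
using that $(v)_0 = v|_T$, $(v)_b = v|_e$ and $(Q_h v)_0 = Q_0 v$, $(Q_h v)_b = Q_b v$. The first two terms are handled directly by the first projection estimate in Lemma 3.3, yielding an $h^{2k}\|v\|_{k+1}^2$ bound. For the third (jump) term, the key observation is that the trace of $Q_0 v \in P_k(T)$ on any edge $e\subset \partial T$ lies in $P_k(e)$, so $Q_b(Q_0 v|_e) = Q_0 v|_e$; therefore $Q_b v - Q_0 v = Q_b(v - Q_0 v)$ on $e$, and $L^2$-stability of $Q_b$ gives $\|Q_b v - Q_0 v\|_e \leq \|v - Q_0 v\|_e$. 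Applying the trace inequality (Lemma 3.1) to $v - Q_0 v$ and then the first projection estimate again produces $h_T^{-1}\|v-Q_0 v\|_{\partial T}^2 \lesssim h_T^{-2}\|v-Q_0 v\|_T^2 + \|\nabla(v-Q_0 v)\|_T^2$, which sums to an $h^{2k}\|v\|_{k+1}^2$ contribution. Combining the three pieces and using $\|v\|_{k+1}\lesssim 1$ gives $\delta_{h,\mu} \lesssim h^k$.

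The $X$-norm estimate is simpler. Because $X = L^2(\partial\Omega)$, only the boundary component of $v - Q_h v$ matters, so
\begin{equation*}
\|v - Q_h v\|_X^2 = \sum_{e\subset\partial\Omega} \|v - Q_b v\|_e^2 \;\leq\; \sum_{T\in\mathcal{T}_h} \|v - Q_b v\|_{\partial T}^2.
\end{equation*}
Applying the third projection estimate in Lemma 3.3 directly gives a bound of $C h^{2k+1}\|v\|_{k+1}^2$, from which $\delta_{h,\mu}' \lesssim h^{k+1/2}$ follows. I do not foresee a serious obstacle; the only genuinely nonroutine point is the identity $Q_b v - Q_0 v = Q_b(v - Q_0 v)$ on edges, which is what converts the jump term into a standard interpolation quantity amenable to the trace inequality.
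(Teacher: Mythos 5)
Your proof is correct and follows essentially the same route as the paper: expand $\|v-Q_hv\|_V^2$, reduce the jump term to $h_T^{-1}\|v-Q_0v\|_{\partial T}^2$ via the $L^2(e)$-stability of $Q_b$ (using $Q_bQ_0v=Q_0v$ on each edge), then apply the trace and projection inequalities, and for the $X$-norm invoke the third projection estimate directly. The only difference is that you make explicit the norm-equivalence argument on the finite-dimensional eigenspace that lets $\|v\|_{k+1}$ be absorbed into the constant, which the paper leaves implicit.
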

\begin{proof}
Assume $u\in R(E_{\mu}(K))\subset H^{k+1}(\Omega)$. From the trace inequality and the projection inequality, we can obtain
\begin{align*}
&\Vert u-Q_{h}u \Vert_{V}^{2} \\
=&\sum\limits_{T\in \mathcal{T}_{h}}\Vert \nabla(u-Q_{0}u) \Vert_{T}^{2}+\sum\limits_{T\in \mathcal{T}_{h}}\Vert (u-Q_{0}u) \Vert_{T}^{2}+\sum\limits_{T\in \mathcal{T}_{h}}h_{T}^{-1}\Vert Q_{0}u-Q_{b}u \Vert_{\partial T}^{2}  \\
\leq&\sum\limits_{T\in \mathcal{T}_{h}}\Vert \nabla(u-Q_{0}u) \Vert_{T}^{2}+\sum\limits_{T\in \mathcal{T}_{h}}\Vert (u-Q_{0}u) \Vert_{T}^{2}+\sum\limits_{T\in \mathcal{T}_{h}}h_{T}^{-1}\Vert Q_{0}u-u \Vert_{\partial T}^{2}  \\
\leq&\sum\limits_{T\in \mathcal{T}_{h}}\Vert \nabla(u-Q_{0}u) \Vert_{T}^{2}+\sum\limits_{T\in \mathcal{T}_{h}}\Vert (u-Q_{0}u) \Vert_{T}^{2}\\
&+\sum\limits_{T\in \mathcal{T}_{h}}C(h_{T}^{-2}\Vert Q_{0}u-u \Vert_{T}^{2}+\Vert \nabla(Q_{0}u-u) \Vert_{T}^{2})  \\
\lesssim&h^{2k},
\end{align*}
and
$$\Vert u-Q_{h}u \Vert_{X}^{2}\leq\sum\limits_{T\in \mathcal{T}_{h}}\Vert (u-Q_{b}u) \Vert_{\partial T}^{2}\lesssim h^{2k+1}.$$
Then we complete the proof.
\end{proof}

Consider the source problem corresponding to the Steklov eigenvalue problem (\ref{2.1}):
\begin{equation}
\left\{
\begin{aligned}
-\Delta{u}+u &= 0,& &\quad\text{in }\Omega,&\\
\frac{\partial u}{\partial \mathbf{n}} &= f,& &\quad \text{on }\partial\Omega,&\\
\end{aligned}
\right.
\label{2.5}
\end{equation}
where $f\in L^{2}(\partial \Omega)$. The corresponding WG scheme is: Find $u_{h}\in V_{h}$ such that
\begin{equation}
a_{w}(u_{h},v_{h})=b(f,v_{h}), \quad\forall v_{h}\in V_{h}.
\label{2.6}
\end{equation}
And we have the following two equations:
\begin{equation}
\left\{
\begin{aligned}
a(Kf,v)=b(f,v), \quad\forall v\in V_{c},\\
a(u,v)=b(f,v), \quad\forall v\in V_{c},
\end{aligned}
\right.
\label{2.7}
\end{equation}
\begin{equation}
\left\{
\begin{aligned}
a_{w}(K_{h}Q_{h}f,v_{h})=b(f,v_{h}), \quad\forall v_{h}\in V_{h},\\
a_{w}(u_{h},v_{h})=b(f,v_{h}), \quad\forall v_{h}\in V_{h}.
\end{aligned}
\right.
\label{2.8}
\end{equation}
From (\ref{2.7}) and (\ref{2.8}), we can get $Kf=u$ and
\:$K_{h}Q_{h}f=u_{h}$\:, respectively. The $H^1$ and $L^2$ error estimates of (\ref{2.6}) are given in the next section, and the details can be seen in Theorems \ref{theorem3.3} and \ref{theorem3.4}. Hence we have the following lemma.

\begin{lemma}
\label{lemma2.6}
Assume $R(E_{\mu}(K))\subset H^{k+1}(\Omega)$, the following error estimates hold
\begin{align*}
&e_{h,\mu}\lesssim \gamma(h)^{-1}h^{k}, \\
&e_{h,\mu}'\lesssim \gamma(h)^{-1}h^{k+\frac{r}{2}},
\end{align*}
where $1/2<r\leq 1$.
\end{lemma}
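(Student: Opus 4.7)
The plan is to recognize that both bounds are just operator-norm restatements of source-problem error estimates. Indeed, from the identifications in (\ref{2.7}) and (\ref{2.8}), for any $f\in V_c$ the function $u:=Kf$ is the weak solution of the source problem (\ref{2.5}) with boundary data $f$, and $u_h:=K_hQ_hf$ is its WG approximation defined by (\ref{2.6}). Hence $(K-K_hQ_h)f=u-u_h$, and for the dual quantity $\Pi_0(K-K_hQ_h)f$ one has $\Pi_0 u=u|_{\partial\Omega}$ and $\Pi_0 u_h=(u_h)_b$, which is exactly the object measured on the boundary in $X=L^2(\partial\Omega)$.

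For the first estimate, I would pick $f\in R(E_\mu(K))$ with $\|f\|_V=1$. Since $K$ leaves $R(E_\mu(K))$ invariant and $R(E_\mu(K))\subset H^{k+1}(\Omega)$ is finite dimensional, all norms are equivalent there, so $\|u\|_{k+1}=\|Kf\|_{k+1}\lesssim\|f\|_V=1$. Invoking Theorem \ref{theorem3.3} (the $H^1$-type error estimate for the source problem proved in Section 4) yields
\begin{equation*}
\|u-u_h\|_V\lesssim \gamma(h)^{-1}h^k\|u\|_{k+1}\lesssim \gamma(h)^{-1}h^k,
\end{equation*}
and taking the supremum over the unit $V$-ball of $R(E_\mu(K))$ gives $e_{h,\mu}\lesssim\gamma(h)^{-1}h^k$.

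For the second estimate, I would apply Theorem \ref{theorem3.4} (the $L^2$-boundary error estimate for the source problem). That theorem, proved by a duality argument that exploits the elliptic regularity shift $\|K\cdot\|_{1+r}\lesssim\|\cdot\|_{\frac12,\partial\Omega}$ used already in Lemma \ref{lemma2.4}, produces an additional half power of regularity $r$, giving
\begin{equation*}
\|u-u_h\|_{L^2(\partial\Omega)}\lesssim \gamma(h)^{-1}h^{k+\frac{r}{2}}\|u\|_{k+1}.
\end{equation*}
Since $\Pi_0(Kf-K_hQ_hf)=u|_{\partial\Omega}-(u_h)_b$ is exactly what this estimate controls in the $X$-norm, the bound $e'_{h,\mu}\lesssim\gamma(h)^{-1}h^{k+r/2}$ follows after taking the supremum.

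The proof itself is therefore a short bookkeeping argument; the actual work has been pushed into Section 4. The hard part will be Theorems \ref{theorem3.3} and \ref{theorem3.4}, not the present lemma: one must prove the $H^1$ source-problem estimate via a Galerkin-orthogonality argument using the commutativity property $\nabla_w(Q_h\phi)=\mathbb{Q}_h(\nabla\phi)$ from Lemma \ref{lemma2.2} and the stabilizer control, accounting for the factor $\gamma(h)^{-1}$ that enters from the coercivity in Lemma \ref{lemma2.3}; and then the $L^2$-boundary estimate by a duality argument whose gain is limited to $h^{r/2}$ because only $H^{1+r}$-regularity is available for the dual problem on a general polygonal or polyhedral domain.
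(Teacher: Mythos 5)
Your proposal is correct and matches the paper's treatment exactly: the paper likewise proves Lemma \ref{lemma2.6} by identifying $Kf=u$ and $K_hQ_hf=u_h$ via (\ref{2.7})--(\ref{2.8}) and then quoting the source-problem estimates of Theorems \ref{theorem3.3} and \ref{theorem3.4}, deferring all the real work to Section 4. Your additional remark that $\|Kf\|_{k+1}\lesssim\|f\|_V$ on the finite-dimensional invariant eigenspace is a detail the paper leaves implicit, and it is a welcome clarification.
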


According to \cite[Theorem 2.1, Theorem 2.2]{MR3919912}, we have the following theorem. The details can be seen in \cite[Theorem 7.1]{Ivo1991Eigenvalue}.

\begin{theorem}
\label{theorem2.7}
Let $\lambda$ be an eigenvalue of $(\ref{2.2})$ with multiplicity $m$, $\{u_{j}\}_{j=1}^{m}$ represent a basis of the corresponding m-dimensional eigenspace $R(E_{\mu}(K))\subset H^{k+1}(\Omega)$. Similarly, let $\{\lambda_{j,h}\}_{j=1}^{m}$ denote the eigenvalues of $(\ref{2.4})$, $\{u_{j,h}\}_{j=1}^{m}$ be a basis of the corresponding eigenspace $R(E_{\mu,h}(K_{h}))$. For each $j=1,\cdots,m$, it can be established that there exists an eigenfunction $u_{j}\in R(E_{\mu}(K))$ such that
\begin{align*}
&\Vert u_{j}-u_{j,h} \Vert_{V}\lesssim \gamma(h)^{-1}h^{k},\\
&\Vert u_{j}-u_{j,h} \Vert_{X}\lesssim \gamma(h)^{-1}h^{k+\frac{r}{2}}.
\end{align*}
\end{theorem}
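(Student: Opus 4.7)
The plan is to verify the remaining hypotheses of the abstract spectral approximation framework of \cite[Section~2]{MR3919912} and then quote its two main theorems. Lemmas~\ref{lemma2.3}--\ref{lemma2.6}, together with the explicit construction of $Q_h$ given just after Lemma~\ref{lemma2.4}, already establish assumptions (A1)--(A5). What remains is to check the residual hypotheses (A6)--(A7), which concern the matching of the discrete and continuous eigenvalue multiplicities near $\mu=1/\lambda$ and a compatibility property of $Q_h$ with the boundary form $b(\cdot,\cdot)$. The multiplicity condition follows from the operator-norm convergence $\Vert (K-K_h Q_h)|_{R(E_\mu(K))}\Vert_V=e_{h,\mu}\to 0$ combined with the compactness of $K$ (Lemma~\ref{lemma2.4}) and a gap-type perturbation argument for discrete spectra of compact operators; the compatibility is built into the defining identity $b(Q_h w,v_h)=b(w,v_h)$ used to verify (A3).

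With all hypotheses in place, \cite[Theorem~2.1]{MR3919912} produces, for each $j=1,\dots,m$, an eigenfunction $u_j\in R(E_\mu(K))$ such that
\begin{equation*}
\Vert u_j-u_{j,h}\Vert_V \;\lesssim\; e_{h,\mu} + \delta_{h,\mu},
\end{equation*}
and \cite[Theorem~2.2]{MR3919912} (equivalently the Babu\v{s}ka--Osborn theory in the form of \cite[Theorem~7.1]{Ivo1991Eigenvalue}) yields the corresponding $X$-norm, i.e.\ $L^2(\partial\Omega)$, bound
\begin{equation*}
\Vert u_j-u_{j,h}\Vert_X \;\lesssim\; e_{h,\mu}' + \delta_{h,\mu}'.
\end{equation*}
Inserting $\delta_{h,\mu}\lesssim h^k$, $\delta_{h,\mu}'\lesssim h^{k+1/2}$ from Lemma~\ref{lemma2.5} and $e_{h,\mu}\lesssim\gamma(h)^{-1}h^k$, $e_{h,\mu}'\lesssim\gamma(h)^{-1}h^{k+r/2}$ from Lemma~\ref{lemma2.6}, and using $\gamma(h)\le 1$ together with $1/2<r\le 1$ so that $\gamma(h)^{-1}h^k$ dominates $h^k$ and $\gamma(h)^{-1}h^{k+r/2}$ dominates $h^{k+1/2}$, yields the claimed rates.

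The principal technical obstacle, already absorbed into the consistency bound of Lemma~\ref{lemma2.6}, is the nonconformity $V_h\not\subset V_c$: one cannot directly compare $Kf$ and $K_h f$ because they live in different spaces equipped with different bilinear forms. The framework of \cite{MR3919912} circumvents this by measuring the spectral error through the composite operator $K-K_h Q_h$ in the sum norm $\Vert\cdot\Vert_V$ on $V=V_c+V_h$. The price one pays is precisely the factor $\gamma(h)^{-1}$ appearing in both final estimates, reflecting the degradation of coercivity in Lemma~\ref{lemma2.3} when the stabilizer coefficient $\gamma(h)$ is allowed to tend to zero---this is exactly the trade-off deliberately made in order to secure the asymptotic lower-bound property that is the subject of the remainder of Section~3.
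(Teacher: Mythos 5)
Your proposal is correct and follows essentially the same route as the paper, which proves Theorem \ref{theorem2.7} simply by citing \cite[Theorems 2.1 and 2.2]{MR3919912} (cf.\ \cite[Theorem 7.1]{Ivo1991Eigenvalue}) after assumptions (A1)--(A5) have been verified, and then inserting the bounds of Lemmas \ref{lemma2.5} and \ref{lemma2.6}. The only minor inaccuracy is your description of (A6)--(A7): in this paper those concern $\varepsilon_{h,u}\to 0$ and the inequality $\varepsilon_{h,u}\ge\lambda_h\Vert u-u_h\Vert_X^2$, and they are needed only for the later eigenvalue estimate and the lower-bound property, not for the eigenfunction estimates proved here, so this does not affect the validity of your argument.
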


Given that the parameter $\gamma(h)$ is chosen as either $\gamma(h)=h^{\varepsilon}$, where $0< \eps<1/4$, or $\gamma(h)=-\frac{1}{\log(h)}$, the conclusions presented in Theorem $\ref{theorem2.7}$ are as follows
\begin{align*}
&\Vert u_{j}-u_{j,h} \Vert_{V}\lesssim h^{k-\varepsilon},\\
&\Vert u_{j}-u_{j,h} \Vert_{X}\lesssim h^{k+\frac{r}{2}-\varepsilon},
\end{align*}
or
\begin{align*}
&\Vert u_{j}-u_{j,h} \Vert_{V}\lesssim -\log(h)h^{k},\\
&\Vert u_{j}-u_{j,h} \Vert_{X}\lesssim -\log(h)h^{k+\frac{r}{2}}.
\end{align*}

Denote $\varepsilon_{h,u}=a(u,u)-a_{w}(Q_{h}u,Q_{h}u).$ Next we verify (A6): $\forall \mu\in\sigma_K$, $u\in R(E_{\mu}(K))$, $\varepsilon_{h,\mu}\rightarrow0$ as $h\rightarrow0$, where $\sigma_K$ is the spectrum of $K$. 

\begin{lemma}
\label{lemma2.8}
Let $u\in H^{k+1}(\Omega)$, the following estimate holds
$$\vert \varepsilon_{h,u} \vert \lesssim h^{2k}.$$
\end{lemma}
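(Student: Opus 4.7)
The plan is to expand $\varepsilon_{h,u}$ term by term and then bound each contribution using the tools of Lemmas 2.1--2.3 together with the commutativity property (Lemma \ref{lemma2.2}). By Lemma \ref{lemma2.2}, $\nabla_w(Q_h u) = \mathbb{Q}_h(\nabla u)$, so
\begin{equation*}
a_w(Q_h u, Q_h u) = (\mathbb{Q}_h \nabla u, \mathbb{Q}_h \nabla u) + (Q_0 u, Q_0 u) + s(Q_h u, Q_h u).
\end{equation*}
Since $\mathbb{Q}_h$ and $Q_0$ are $L^2$-orthogonal projections, Pythagoras gives $(\nabla u,\nabla u) - (\mathbb{Q}_h \nabla u, \mathbb{Q}_h \nabla u) = \|\nabla u - \mathbb{Q}_h \nabla u\|^2$ and $(u,u)-(Q_0 u, Q_0 u) = \|u - Q_0 u\|^2$, hence
\begin{equation*}
\varepsilon_{h,u} = \sumT \|\nabla u - \mathbb{Q}_h \nabla u\|_T^2 + \sumT \|u - Q_0 u\|_T^2 - s(Q_h u, Q_h u).
\end{equation*}

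Next I would bound each of the three terms by $h^{2k}\|u\|_{k+1}^2$. The first two follow directly from the projection inequality of Lemma 2.3, yielding $O(h^{2k})$ and $O(h^{2(k+1)})$ respectively. For the stabilizer, the key observation is that $Q_0 u|_e \in P_k(e)$, so $Q_b(Q_0 u) = Q_0 u$ on each edge $e$, which gives $Q_0 u - Q_b u = Q_b(Q_0 u - u)$ and hence the pointwise-on-$\partial T$ estimate $\|Q_0 u - Q_b u\|_{\partial T} \le \|Q_0 u - u\|_{\partial T}$. Applying the trace inequality (Lemma 2.1) and the projection inequality (Lemma 2.3) yields
\begin{equation*}
\sumT h_T^{-1}\|Q_0 u - Q_b u\|_{\partial T}^2 \lesssim \sumT \bigl(h_T^{-2}\|u-Q_0 u\|_T^2 + \|\nabla(u-Q_0 u)\|_T^2\bigr) \lesssim h^{2k}\|u\|_{k+1}^2.
\end{equation*}
Multiplying by the stabilizer weight $\gamma(h) \le 1$ preserves the $O(h^{2k})$ bound.

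Combining the three estimates gives $|\varepsilon_{h,u}| \lesssim h^{2k}$. The only nontrivial step is handling the stabilizer term, and that reduces to the standard trick of controlling the jump $Q_0 u - Q_b u$ by the trace error $u - Q_0 u$; once this identity is exploited, everything else is a routine application of the previously stated lemmas.
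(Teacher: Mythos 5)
Your proof is correct and follows essentially the same route as the paper: apply the commutativity $\nabla_w Q_h u=\mathbb{Q}_h\nabla u$, reduce the volume terms to projection errors via Pythagoras, and control the stabilizer jump $Q_0u-Q_bu$ by $\Vert Q_0u-u\Vert_{\partial T}$ followed by the trace and projection inequalities. Your explicit justification that $Q_b(Q_0u)=Q_0u$ on each edge is a detail the paper leaves implicit, but the argument is the same.
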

\begin{proof}
It follows from Lemma $\ref{lemma2.2}$ that
\begin{flalign*}
&\vert \varepsilon_{h,u} \vert  \\
=&\left\vert \sum\limits_{T\in \mathcal{T}_{h}}\Vert \nabla u \Vert_{T}^{2}+\sum\limits_{T\in \mathcal{T}_{h}}\Vert u \Vert_{T}^{2}-\sum\limits_{T\in \mathcal{T}_{h}}\Vert \nabla_{w}Q_{h}u \Vert_{T}^{2}-\sum\limits_{T\in \mathcal{T}_{h}}\Vert Q_{0}u \Vert_{T}^{2}-s(Q_{h}u,Q_{h}u) \right\vert            &\nonumber\\
=&\left\vert \sum\limits_{T\in \mathcal{T}_{h}}\Vert \nabla u \Vert_{T}^{2}+\sum\limits_{T\in \mathcal{T}_{h}}\Vert u \Vert_{T}^{2}-\sum\limits_{T\in \mathcal{T}_{h}}\Vert \mathbb{Q}_{h}\nabla u \Vert_{T}^{2}-\sum\limits_{T\in \mathcal{T}_{h}}\Vert Q_{0}u \Vert_{T}^{2}-s(Q_{h}u,Q_{h}u) \right\vert            &\nonumber\\
\leq&\sum\limits_{T\in \mathcal{T}_{h}}\Vert \nabla u-\mathbb{Q}_{h}\nabla u \Vert_{T}^{2}+\sum\limits_{T\in \mathcal{T}_{h}}\Vert u-Q_{0}u \Vert_{T}^{2}+\sum\limits_{T\in \mathcal{T}_{h}}\gamma(h)h_{T}^{-1}\Vert Q_{0}u-u \Vert_{\partial T}^{2}          &\nonumber\\
\leq&\sum\limits_{T\in \mathcal{T}_{h}}\Vert \nabla u-\mathbb{Q}_{h}\nabla u \Vert_{T}^{2}+\sum\limits_{T\in \mathcal{T}_{h}}\Vert u-Q_{0}u \Vert_{T}^{2}\\
&+\sum\limits_{T\in \mathcal{T}_{h}}C\gamma(h)(h_{T}^{-2}\Vert Q_{0}u-u \Vert_{T}^{2}+\Vert \nabla(Q_{0}u-u) \Vert_{T}^{2})          &\nonumber\\
\lesssim& h^{2k}.  &\nonumber
\end{flalign*}
This completes the proof.
\end{proof}

\begin{theorem}
\label{theorem2.9}
Let $\lambda$ be an eigenvalue of $(\ref{2.2})$ with multiplicity $m$, $\{u_{j}\}_{j=1}^{m}$ represent a basis of the corresponding m-dimensional eigenspace $R(E_{\mu}(K))\subset H^{k+1}(\Omega)$. Let $\{\lambda_{j,h}\}_{j=1}^{m}$ denote the eigenvalues of $(\ref{2.4})$, $\{u_{j,h}\}_{j=1}^{m}$ be a basis of the corresponding eigenspace $R(E_{\mu,h}(K_{h}))$. When the parameter $h$ becomes sufficiently small, for each $j=1,\cdots,m$, the following error estimate holds
$$\vert \lambda-\lambda_{j,h} \vert \lesssim \gamma(h)^{-2}h^{2k}.$$
\end{theorem}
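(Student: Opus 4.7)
The plan is to invoke the abstract Babu\v{s}ka--Osborn framework of \cite[Theorems 2.1--2.2]{MR3919912} (see also \cite[Theorem 7.1]{Ivo1991Eigenvalue}). All the hypotheses (A1)--(A7) required by that framework have already been verified above: coercivity in Lemma \ref{lemma2.3}, compactness of $K$ and $K_h$ in Lemma \ref{lemma2.4}, the projection property (A3) immediately afterwards, the approximation rates $\delta_{h,\mu} \lesssim h^k$ and $\delta'_{h,\mu} \lesssim h^{k+1/2}$ in Lemma \ref{lemma2.5}, the operator-convergence rates in Lemma \ref{lemma2.6}, and the consistency bound in Lemma \ref{lemma2.8}. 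What remains is the quantitative bookkeeping.

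The key step is the derivation of a Babu\v{s}ka--Osborn-type eigenvalue error identity. The normalizations $\|u_j\|_b = \|u_{j,h}\|_b = 1$ together with the eigenvalue equations tested against $u_j$ and $u_{j,h}$ give $\lambda = a(u_j, u_j)$ and $\lambda_{j,h} = a_w(u_{j,h}, u_{j,h})$. I would insert the intermediate quantity $a_w(Q_h u_j, Q_h u_j)$ and expand, combining the resulting consistency term $\varepsilon_{h,u_j}$ with the cross term produced by the discrete eigenvalue equation $a_w(u_{j,h}, Q_h u_j) = \lambda_{j,h} b_w(u_{j,h}, Q_h u_j)$, the orthogonality $b_w(Q_h u_j, u_{j,h}) = \langle u_j, (u_{j,h})_b \rangle_{\partial\Omega}$ coming from (A3), and a polarization identity on $\partial\Omega$, to arrive at
\begin{equation*}
\lambda - \lambda_{j,h} = \varepsilon_{h,u_j} + a_w(Q_h u_j - u_{j,h}, Q_h u_j - u_{j,h}) - \lambda_{j,h}\, \|u_j - (u_{j,h})_b\|_{L^2(\partial\Omega)}^2.
\end{equation*}

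Bounding each term is then routine. Continuity of $a_w$, the triangle inequality $\|Q_h u_j - u_{j,h}\|_V \le \|Q_h u_j - u_j\|_V + \|u_j - u_{j,h}\|_V$, Lemma \ref{lemma2.5}, and Theorem \ref{theorem2.7} yield $a_w(Q_h u_j - u_{j,h}, Q_h u_j - u_{j,h}) \lesssim \gamma(h)^{-2} h^{2k}$. The boundary-trace term is $\lesssim \gamma(h)^{-2} h^{2k+r}$ by Theorem \ref{theorem2.7}, and the consistency term satisfies $|\varepsilon_{h,u_j}| \lesssim h^{2k}$ by Lemma \ref{lemma2.8}. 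Since $\gamma(h) \le 1$, the dominant contribution is $\gamma(h)^{-2} h^{2k}$, which is precisely the claimed bound.

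The main obstacle is the clean derivation of the quadratic identity, since $Q_h u_j$ is an eigenfunction of neither problem and the boundary forms $b$ and $b_w$ act on different objects. The projection property (A3) together with polarization is precisely what converts the first-order-looking cross term $a_w(Q_h u_j - u_{j,h}, u_{j,h})$ into a pure squared boundary-trace error, so that all contributions to $\lambda - \lambda_{j,h}$ become genuinely quadratic in the eigenfunction approximation error and can be absorbed into the target rate.
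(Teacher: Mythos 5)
Your proposal is correct and the final bookkeeping matches the paper's. The paper's own proof is essentially a one-line citation: it invokes \cite[Theorem 2.3]{MR3919912}, which delivers the abstract bound $\vert\lambda-\lambda_{j,h}\vert\le C(\varepsilon_{h,u_j}+e_{h,\lambda^{-1}}^2+e_{h,\lambda^{-1}}'^2+\delta_{h,\lambda^{-1}}^2\gamma(h)^{-2}+\delta_{h,\lambda^{-1}}'^2\gamma(h)^{-2})$, and then substitutes Lemmas \ref{lemma2.5}, \ref{lemma2.6} and \ref{lemma2.8}. You instead unfold that black box: your identity
\begin{equation*}
\lambda-\lambda_{j,h}=\varepsilon_{h,u_j}+a_w(Q_hu_j-u_{j,h},Q_hu_j-u_{j,h})-\lambda_{j,h}\Vert u_j-(u_{j,h})_b\Vert_{L^2(\partial\Omega)}^2
\end{equation*}
checks out (expand $a_w(Q_hu_j-u_{j,h},\cdot)-\lambda_{j,h}b_w(Q_hu_j-u_{j,h},\cdot)$, use the discrete eigenvalue equation to kill the cross terms, use $a_w(Q_hu_j,Q_hu_j)=\lambda-\varepsilon_{h,u_j}$, and combine $b_w(Q_hu_j-u_{j,h},Q_hu_j-u_{j,h})$ with $\Vert u_j-Q_bu_j\Vert^2$ via the $L^2(\partial\Omega)$-orthogonality of $Q_b$, which is exactly your (A3)/polarization step). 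The term-by-term bounds then follow from continuity of $a_w$ in $\Vert\cdot\Vert_V$ (valid since $\gamma(h)\le 1$), Lemma \ref{lemma2.5}, Theorem \ref{theorem2.7} and Lemma \ref{lemma2.8}, yielding $\gamma(h)^{-2}h^{2k}$ as the dominant term. What your route buys is self-containedness, and as a bonus it exposes the same quadratic identity that underlies the lower-bound argument (the paper's (A7) verification and Theorem \ref{theorem2.15}); what it costs is the need to justify the identity and the continuity of $a_w$ explicitly, and the minor caveat that the eigenfunction $u_j$ paired with $u_{j,h}$ by Theorem \ref{theorem2.7} must be taken with $\Vert u_j\Vert_b=1$, a normalization adjustment of higher order that should be mentioned.
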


\begin{proof}
It follows from \cite[Theorem 2.3]{MR3919912} that
$$\vert \lambda-\lambda_{j,h} \vert \leq C(\varepsilon_{h,u_{j}}+e_{h,\lambda^{-1}}^{2}+e_{h,\lambda^{-1}}'^{2}+\delta_{h,\lambda^{-1}}^{2}\gamma(h)^{-2}+
\delta_{h,\lambda^{-1}}'^{2}\gamma(h)^{-2}).$$
From Lemmas $\ref{lemma2.5}$, $\ref{lemma2.6}$ and $\ref{lemma2.8}$, we have
\begin{align*}
&\delta_{h,\lambda^{-1}}\lesssim h^{k}, ~\delta_{h,\lambda^{-1}}'\lesssim h^{k+\frac{1}{2}},\\
&e_{h,\lambda^{-1}}\lesssim \gamma(h)^{-1}h^{k}, ~e_{h,\lambda^{-1}}'\lesssim \gamma(h)^{-1}h^{k+\frac{r}{2}},\\
&\vert \varepsilon_{h,u} \vert \lesssim h^{2k},
\end{align*}
which implies that $$\vert \lambda-\lambda_{j,h} \vert \lesssim \gamma(h)^{-2}h^{2k}.$$
Then we complete the proof.
\end{proof}

Given that the parameter $\gamma(h)$ is chosen as either $\gamma(h)=h^{\varepsilon}$, where $0< \eps<1/4$, or $\gamma(h)=-\frac{1}{\log(h)}$, the conclusion presented in Theorem $\ref{theorem2.9}$ is as follows
$$\vert \lambda-\lambda_{j,h} \vert \lesssim h^{2k-2\varepsilon},$$
or
$$\vert \lambda-\lambda_{j,h} \vert \lesssim {\log(h)}^2h^{2k}.$$

Next we verify (A7) to prove the lower bound property. (A7): Suppose $(\lambda,u)$ is an eigenpair of $(\ref{2.2})$, $(\lambda_{h},u_{h})$ is an eigenpair of $(\ref{2.4})$, there holds $\varepsilon_{h,u}\geq \lambda_{h}\Vert u-u_{h} \Vert_{X}^{2}.$
The following estimates are crucial, and the proof can be found in \cite[Theorem 2.1]{MR3120579}.

\begin{lemma}
\label{lemma2.10}
The exact eigenfunction $u$ of the Steklov eigenvalue problem $(\ref{2.2})$ exhibits lower bounds for its convergence rates as follows
\begin{align*}
&\sum\limits_{T\in \mathcal{T}_{h}}\Vert \nabla u-\mathbb{Q}_{h}\nabla u \Vert_{T}^{2}\gtrsim h^{2k},\\
&\sum\limits_{T\in \mathcal{T}_{h}}\Vert u-Q_{0}u \Vert_{T}^{2}\gtrsim h^{2(k+1)}.
\end{align*}
\end{lemma}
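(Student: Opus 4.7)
The plan is to establish these two saturation-type lower bounds by combining a local lower bound on the best-polynomial-approximation error with the fact that a Steklov eigenfunction cannot be a polynomial of any fixed degree. I will work cell by cell and then sum, relying on analyticity of $u$ in the interior of $\Omega$.

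First I would prove a local approximation lower bound on a reference element $\hat T$: for any smooth function $\hat v$ whose $(k+1)$-st derivatives do not all vanish at some fixed point, there is a constant $c(\hat v)>0$ such that
\begin{equation*}
\|\hat v - \hat Q_0 \hat v\|_{\hat T}^2 \;\ge\; c\sum_{|\alpha|=k+1}|D^\alpha \hat v(\hat x)|^2,
\qquad
\|\nabla \hat v - \hat{\mathbb Q}_h\nabla \hat v\|_{\hat T}^2 \;\ge\; c\sum_{|\alpha|=k+1}|D^\alpha \hat v(\hat x)|^2.
\end{equation*}
This follows from a Bramble--Hilbert/Taylor argument: the residual of the Taylor polynomial of degree $k$ is not annihilated by $\hat Q_0$ whenever the leading $(k+1)$-st order term is nontrivial. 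Scaling from $\hat T$ back to a physical cell $T\in\mathcal{T}_h$ of diameter $h_T$ then yields, for $u$ sufficiently smooth on $T$,
\begin{equation*}
\|u-Q_0 u\|_T^2 \;\gtrsim\; h_T^{2(k+1)+d}\,\Phi_{k+1}(u,T),
\qquad
\|\nabla u-\mathbb{Q}_h\nabla u\|_T^2 \;\gtrsim\; h_T^{2k+d}\,\Phi_{k+1}(u,T),
\end{equation*}
where $\Phi_{k+1}(u,T):=\min_{x\in T}\sum_{|\alpha|=k+1}|D^\alpha u(x)|^2$ and the $h_T^d$ factor is the volumetric Jacobian.

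Next I would use the PDE to show that $\Phi_{k+1}(u,\cdot)$ is bounded below on some open subset. Because $-\Delta u+u=0$ in $\Omega$ with analytic coefficients, elliptic regularity gives that $u$ is real analytic in $\Omega$. If $u$ were a polynomial of degree $d$, then $\Delta u$ would have degree at most $d-2$; the identity $\Delta u=u$ would force $d\le d-2$, hence $u\equiv 0$, contradicting $\|u\|_b=1$. Therefore $u$ is not a polynomial of degree $\le k$, so there exist a multi-index $\alpha_0$ with $|\alpha_0|=k+1$ and a point $x_0\in\Omega$ with $D^{\alpha_0}u(x_0)\neq 0$. By continuity of $D^{\alpha_0}u$ there is a constant $c_0>0$ and an open set $U\Subset\Omega$ with $|D^{\alpha_0}u(x)|\ge c_0$ for all $x\in U$.

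Finally I would sum the local estimates only over the cells $T\subset U$. Under the shape-regular assumption, the number of such cells is $\gtrsim |U|/h^d$, and each contributes at least $h^{2(k+1)+d}c_0^2$ to the first sum and $h^{2k+d}c_0^2$ to the second; the two $h^d$ factors cancel and yield $\sum_T\|u-Q_0u\|_T^2\gtrsim h^{2(k+1)}$ and $\sum_T\|\nabla u-\mathbb{Q}_h\nabla u\|_T^2\gtrsim h^{2k}$. The main obstacle is the non-polynomial argument: one must exclude the degenerate possibility that $u$ is locally (hence globally, by analyticity) a polynomial of degree $\le k$, which is precisely where the equation $-\Delta u+u=0$ is used. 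The remaining steps are routine scaling/Taylor-expansion calculations carried out, e.g., in \cite[Theorem 2.1]{MR3120579}.
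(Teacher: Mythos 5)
The paper does not actually prove Lemma \ref{lemma2.10}: it imports the statement wholesale from \cite[Theorem 2.1]{MR3120579}. Your proposal reconstructs the proof of that cited result, and the reconstruction is essentially sound: the three ingredients --- a local Taylor/Bramble--Hilbert lower bound for the best approximation by $P_k(T)$ (resp. $[P_{k-1}(T)]^d$ for the gradient), the observation that a nonzero solution of $\Delta u=u$ cannot be a polynomial (degree count) and hence, by interior analyticity, has a nonvanishing derivative of order $k+1$ on some open set compactly contained in $\Omega$, and the summation over the $\gtrsim|U|/h^d$ cells meeting that set --- are exactly what is needed, and the non-polynomiality step is the correct place where the Steklov equation enters. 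Two caveats keep this from being a complete proof as written. First, your reference-element inequality with a constant $c(\hat v)$ depending on $\hat v$ does not scale by itself: uniformity over the cells requires splitting off the degree-$(k+1)$ Taylor term (for which the bound $\ge c\sum_{|\alpha|=k+1}|D^\alpha u(x_T)|^2$ holds with $c=c(k,d)$ by norm equivalence on the finite-dimensional space of homogeneous polynomials modulo $P_k$) and absorbing the remainder, which contributes an extra factor of $h_T$ and is therefore dominated only for $h$ small; you defer this to the reference, which is acceptable but should be stated as the actual structure of the local bound. Second, your final counting step silently uses $h_T\simeq h$ for the cells inside $U$: with shape regularity alone one only gets $\sum_{T\subset U}h_T^{2(k+1)+d}\gtrsim|U|\,h_{\min}^{2(k+1)}$, so the conclusion in terms of the maximal diameter $h$ requires quasi-uniformity of $\mathcal{T}_h$. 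This hypothesis is also implicit in \cite{MR3120579} and in the paper's use of the lemma, but it is worth making explicit.
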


\begin{lemma}
Let $(\lambda,u)$ be an eigenpair of $(\ref{2.2})$, $(\lambda_{h},u_{h})$ be an eigenpair of $(\ref{2.4})$. For sufficiently small $h$, suppose $\gamma(h)\ll 1$, then we have
$$\varepsilon_{h,u}\geq \lambda_{h}\Vert u-u_{h} \Vert_{X}^{2}.$$
\end{lemma}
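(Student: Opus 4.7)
The plan is to make the two sides of the desired inequality comparable by the \emph{exact} order in $h$ and then rely on the parameter regime $\gamma(h) \to 0$ to conclude. Since $\|u\|_{b} = \|u_{h}\|_{b} = 1$ gives $a(u,u)=\lambda$ and $a_{w}(u_{h},u_{h}) = \lambda_{h}$, everything can be made concrete, and the key is a two-sided asymptotic comparison: a lower bound for $\varepsilon_{h,u}$ of order $h^{2k}$, and an upper bound for $\lambda_{h}\|u-u_{h}\|_{X}^{2}$ of strictly higher order.

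First, I would invoke Lemma \ref{lemma2.2} ($\nabla_{w}Q_{h}u = \mathbb{Q}_{h}\nabla u$) together with the fact that $Q_{0}$ and $\mathbb{Q}_{h}$ are $L^{2}$-projections to rewrite
$$\varepsilon_{h,u} = \sum_{T\in\mathcal{T}_{h}}\|\nabla u-\mathbb{Q}_{h}\nabla u\|_{T}^{2}+\sum_{T\in\mathcal{T}_{h}}\|u-Q_{0}u\|_{T}^{2} - s(Q_{h}u,Q_{h}u),$$
exactly as done inside the proof of Lemma \ref{lemma2.8}. Lemma \ref{lemma2.10} then supplies the lower bound $\sum_{T}\|\nabla u-\mathbb{Q}_{h}\nabla u\|_{T}^{2}\gtrsim h^{2k}$. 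Meanwhile, splitting $Q_{0}u-Q_{b}u = (Q_{0}u-u)+(u-Q_{b}u)$ and applying the trace inequality and projection inequality in Lemma 3.3 (as already computed in Lemma \ref{lemma2.8}) gives
$$s(Q_{h}u,Q_{h}u) = \gamma(h)\sum_{T\in\mathcal{T}_{h}}h_{T}^{-1}\|Q_{0}u-Q_{b}u\|_{\partial T}^{2} \lesssim \gamma(h)\,h^{2k}.$$
Because $\gamma(h)\to 0$, the stabilizer term is negligible compared with $\sum_{T}\|\nabla u-\mathbb{Q}_{h}\nabla u\|_{T}^{2}$ for $h$ small, so there exists $c_{0}>0$ with $\varepsilon_{h,u}\ge c_{0}h^{2k}$ once $h$ is sufficiently small.

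For the right-hand side, Theorem \ref{theorem2.7} gives $\|u-u_{h}\|_{X}\lesssim \gamma(h)^{-1}h^{k+r/2}$ with $r>1/2$, so
$$\lambda_{h}\,\|u-u_{h}\|_{X}^{2}\lesssim \gamma(h)^{-2}\,h^{2k+r}.$$
Comparing with the lower bound on $\varepsilon_{h,u}$ reduces the claim to verifying $\gamma(h)^{-2}h^{r}\to 0$ as $h\to 0$. For the two admissible choices $\gamma(h)=h^{\varepsilon}$ with $0<\varepsilon<1/4$ and $\gamma(h)=-1/\log(h)$, this is immediate: in the first case the exponent $r-2\varepsilon>1/2-1/2=0$ is positive, and in the second case the logarithmic factor is negligible against any positive power $h^{r}$.

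The main obstacle I anticipate is not the computation itself but keeping the constants honest when merging the lower bound on the dominant term with the smaller, signed stabilizer contribution: one has to show the stabilizer cannot erase the leading $h^{2k}$ lower bound, which is exactly where the hypothesis $\gamma(h)\ll 1$ is used. Once that ordering is established, the comparison with $\lambda_{h}\|u-u_{h}\|_{X}^{2}$ is purely an asymptotic statement that follows from the a priori estimate in Theorem \ref{theorem2.7}. A minor side check is that the nominal hypothesis $\gamma(h)\ll 1$ in the lemma should be understood in the stronger sense consistent with the two concrete choices of $\gamma(h)$ discussed after Theorem \ref{theorem2.7}, so that $\gamma(h)^{-2}h^{r}\to 0$ actually holds.
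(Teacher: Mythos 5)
Your proposal is correct and follows essentially the same route as the paper: the same rewriting of $\varepsilon_{h,u}$ via Lemma \ref{lemma2.2}, the same lower bound from Lemma \ref{lemma2.10} combined with the trace/projection estimate $s(Q_h u,Q_h u)\lesssim \gamma(h)h^{2k}$, and the same comparison against $\lambda_h\|u-u_h\|_X^2\lesssim \gamma(h)^{-2}h^{2k+r}$ from Theorem \ref{theorem2.7}. Your explicit verification that $\gamma(h)^{-2}h^{r}\to 0$ for the two admissible choices of $\gamma(h)$ is a point the paper leaves implicit, and is a welcome bit of extra care rather than a deviation.
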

\begin{proof}
It follows from Lemma $\ref{lemma2.2}$ that
\begin{flalign*}
&\varepsilon_{h,u}\\
=&a(u,u)-a_{w}(Q_{h}u,Q_{h}u)          &\nonumber\\
=&\sum\limits_{T\in \mathcal{T}_{h}}\Vert \nabla u \Vert_{T}^{2}+\sum\limits_{T\in \mathcal{T}_{h}}\Vert u \Vert_{T}^{2}-\sum\limits_{T\in \mathcal{T}_{h}}\Vert \nabla_{w}Q_{h}u \Vert_{T}^{2}-\sum\limits_{T\in \mathcal{T}_{h}}\Vert Q_{0}u \Vert_{T}^{2}-s(Q_{h}u,Q_{h}u)           &\nonumber\\
=&\sum\limits_{T\in \mathcal{T}_{h}}\Vert \nabla u-\mathbb{Q}_{h}\nabla u \Vert_{T}^{2}+\sum\limits_{T\in \mathcal{T}_{h}}\Vert u-Q_{0}u \Vert_{T}^{2}-\sum\limits_{T\in \mathcal{T}_{h}}\gamma(h)h_{T}^{-1}\Vert Q_{0}u-Q_{b}u \Vert_{\partial T}^{2}.
\end{flalign*}
Combining Lemma $\ref{lemma2.10}$ with the trace inequality, we can obtain
\begin{align*}
&\sum\limits_{T\in \mathcal{T}_{h}}\Vert \nabla u-\mathbb{Q}_{h}\nabla u \Vert_{T}^{2}\gtrsim h^{2k},\\
&\sum\limits_{T\in \mathcal{T}_{h}}\Vert u-Q_{0}u \Vert_{T}^{2}\gtrsim h^{2k+2},\\
&\sum\limits_{T\in \mathcal{T}_{h}}\gamma(h)h_{T}^{-1}\Vert Q_{0}u-Q_{b}u \Vert_{\partial T}^{2}\lesssim \gamma(h)h^{2k}.
\end{align*}
When $h$ is sufficiently small, $\gamma(h)\ll 1$, then we have
$$\varepsilon_{h,u}\gtrsim h^{2k}.$$
It follows from Theorem $\ref{theorem2.7}$ that
$$\lambda_{h}\Vert u-u_{h} \Vert_{X}^{2}\lesssim \gamma(h)^{-2} h^{2k+r}.$$
Thus, for sufficiently small $h$, the following inequality is satisfied
$$\varepsilon_{h,u}\geq \lambda_{h}\Vert u-u_{h} \Vert_{X}^{2}.$$
This completes the proof.
\end{proof}

From \cite[Theorem 2.4]{MR3919912}, we have the following theorem.
\begin{theorem}
\label{theorem2.15}
Let $(\lambda,u)$ be an eigenpair of $(\ref{2.2})$, $(\lambda_{h},u_{h})$ be an eigenpair of $(\ref{2.4})$, assume $\gamma(h)\rightarrow 0$ as $h\rightarrow 0$, for sufficiently small $h$, we have
$$\lambda\geq \lambda_{h}.$$
\end{theorem}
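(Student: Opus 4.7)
The plan is to invoke the abstract lower bound theorem (Theorem 2.4) of \cite{MR3919912}, whose seven hypotheses (A1)--(A7) have already been verified, one by one, in this section: (A1) follows from the symmetry/continuity of $a_w$ together with the coercivity in Lemma \ref{lemma2.3}; (A2) is Lemma \ref{lemma2.4}; (A3) is the commutation $b(Q_h w,v_h)=b(w,v_h)$ noted just before Lemma \ref{lemma2.5}; (A4)--(A5) follow by combining the projection estimates of Lemma \ref{lemma2.5} with the source-problem error bounds of Lemma \ref{lemma2.6}; (A6) is Lemma \ref{lemma2.8}; and (A7) is the lemma immediately above. With every hypothesis in place, the conclusion $\lambda\geq\lambda_h$ is a direct appeal to the abstract framework.

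To make the argument transparent rather than purely cite it, I would start from the identity
\begin{equation*}
\lambda-\lambda_{h} \;=\; \varepsilon_{h,u} \;+\; \bigl(a_w(Q_h u,Q_h u)-\lambda_h b_w(Q_h u,Q_h u)\bigr) \;-\; \lambda_h\bigl(1-b_w(Q_h u,Q_h u)\bigr),
\end{equation*}
which follows from $a(u,u)=\lambda$, $b(u,u)=1$, and the definition of $\varepsilon_{h,u}$. The middle parenthesis is nonnegative by the Rayleigh/min--max characterization of $\lambda_h$ on the finite-dimensional space $V_h$, applied to the test function $Q_h u$. The remaining defect $1-b_w(Q_h u,Q_h u)=\|u-Q_b u\|_{\partial\Omega}^{2}$ can be controlled by $\|u-u_h\|_X^2$ together with terms that are of higher order than $\varepsilon_{h,u}$ via Theorem \ref{theorem2.7}, so that the bracket involving $\lambda_h$ is absorbed by $\varepsilon_{h,u}$ thanks to (A7).

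The main obstacle, and the reason the assumption $\gamma(h)\to 0$ is indispensable, is the verification of (A7) itself, which is where the subtle balance lives. On the one hand, the consistency defect satisfies $\varepsilon_{h,u}\gtrsim h^{2k}$ by combining the decomposition obtained in Lemma \ref{lemma2.8} with the sharp two-sided projection bounds of Lemma \ref{lemma2.10}; the stabilizer contribution is only $O(\gamma(h)h^{2k})$ and is therefore harmless precisely because $\gamma(h)\ll 1$. On the other hand, Theorem \ref{theorem2.7} gives $\lambda_h\|u-u_h\|_X^2\lesssim \gamma(h)^{-2}h^{2k+r}$ with $r>1/2$, so the extra factor $h^{r}$ absorbs the $\gamma(h)^{-2}$ blow-up whenever $\gamma(h)$ does not decay too fast (e.g.\ the choices $\gamma(h)=h^{\eps}$ with $0<\eps<1/4$ or $\gamma(h)=-1/\log h$ suggested earlier). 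For all sufficiently small $h$ the inequality $\varepsilon_{h,u}\geq \lambda_h\|u-u_h\|_X^2$ is then in force, and the desired lower bound $\lambda\geq\lambda_h$ follows.
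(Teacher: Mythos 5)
Your first paragraph is exactly the paper's proof: Theorem \ref{theorem2.15} is stated there as an immediate consequence of the abstract lower-bound result \cite[Theorem 2.4]{MR3919912} once (A1)--(A7) have been verified, and your mapping of each hypothesis to the corresponding lemma of the section is accurate. In that sense the proposal is correct and takes the same route.

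One caution about your supplementary ``transparent'' argument: the identity you write is algebraically valid, but the claim that the middle parenthesis $a_w(Q_h u,Q_h u)-\lambda_h b_w(Q_h u,Q_h u)$ is nonnegative ``by the Rayleigh/min--max characterization applied to the test function $Q_h u$'' only holds when $\lambda_h$ is the \emph{smallest} discrete eigenvalue; a single test vector bounds the Rayleigh quotient from below by $\lambda_{1,h}$, not by $\lambda_{j,h}$ for $j\geq 2$. Handling higher eigenvalues requires the (approximate) $b_w$-orthogonality of $Q_h u$ to the lower discrete eigenspaces, which is precisely what the quantities $e_{h,\mu}$, $e_{h,\mu}'$, $\delta_{h,\mu}$, $\delta_{h,\mu}'$ in the abstract framework control; the cleaner internal decomposition is $\lambda-\lambda_h=\varepsilon_{h,u}+a_w(Q_hu-u_h,Q_hu-u_h)-\lambda_h b_w(Q_hu-u_h,Q_hu-u_h)-\lambda_h\Vert u-Q_bu\Vert_X^2$, with the last two terms absorbed into $-\lambda_h\Vert u-u_h\Vert_X^2$ and then dominated by $\varepsilon_{h,u}$ via (A7). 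Since your proof ultimately rests on the citation rather than on this sketch, the conclusion stands, but as written the elaboration would not survive scrutiny for $j\geq 2$.
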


Combining Theorems \ref{theorem2.9} and \ref{theorem2.15}, we can obtain asymptotic lower bound approximations of the exact eigenvalues.

\section{The source problem error estimate}
In this section, we derive the $H^1$ and $L^2$ error estimates of the WG scheme (\ref{2.6}). First, we recall the source problem and the corresponding numerical scheme:
\begin{equation}
\tag{3.1}
\left\{
\begin{aligned}
-\Delta{u}+u &= 0,& &\quad\text{in }\Omega,&\\
\frac{\partial u}{\partial \mathbf{n}} &= f,& &\quad \text{on }\partial\Omega,&\\
\end{aligned}
\right.
\end{equation}
where $f\in L^{2}(\partial \Omega)$. The corresponding WG scheme is: Find $u_{h}\in V_{h}$ such that
\begin{equation}
\tag{3.2}
a_{w}(u_{h},v_{h})=b(f,v_{h}), \quad\forall v_{h}\in V_{h}.
\end{equation}

\subsection{Error equation}
Let $u$ represent the solution of (\ref{2.5}), and $u_h$ denote the solution of (\ref{2.6}). Denote
$$e_{h}=Q_{h}u-u_{h}=\{e_{0},e_{b}\}=\{Q_{0}u-u_{0},Q_{b}u-u_{b}\},$$
then $e_{h}$ satisfies the following lemma.

\begin{lemma}
\label{lemma3.1}
For any $v_h \in V_{h}$, we have 
\begin{equation*}
a_{w}(e_{h},v_{h})=\ell(u,v_{h})+s(Q_{h}u,v_{h}),
\end{equation*}
where
\begin{equation*}
\ell(u,v_{h})=\sum\limits_{T\in \mathcal{T}_{h}}\langle (\nabla u-\mathbb{Q}_{h}\nabla u)\cdot \mathbf{n},v_{0}-v_{b}\rangle_{\partial T}.
\end{equation*}
\end{lemma}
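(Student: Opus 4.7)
The plan is to compute $a_w(Q_h u, v_h)$ directly, then subtract the equation $a_w(u_h,v_h)=b(f,v_h)$ satisfied by $u_h$, so that the difference collapses to the stated error equation. The term $(\nabla_w Q_h u,\nabla_w v_h)$ will be the one producing $\ell(u,v_h)$; the stabilization $s(Q_h u,v_h)$ will appear untouched; and the $(Q_0 u,v_0)$ piece will cancel cleanly with the consequence of integrating by parts against $-\Delta u + u = 0$.

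First I would invoke the commutativity identity from Lemma \ref{lemma2.2} to replace $\nabla_w(Q_h u)$ by $\mathbb{Q}_h(\nabla u)$, reducing $(\nabla_w Q_h u,\nabla_w v_h)_T$ to $(\mathbb{Q}_h\nabla u,\nabla_w v_h)_T$. Next, I would apply the definition (\ref{2.3}) of the weak gradient with test function $\bw = \mathbb{Q}_h\nabla u\in [P_{k-1}(T)]^d$, then integrate by parts on the resulting $-(v_0,\nabla\cdot\mathbb{Q}_h\nabla u)_T$ term, to obtain
\begin{equation*}
(\mathbb{Q}_h\nabla u,\nabla_w v_h)_T = (\nabla v_0,\mathbb{Q}_h\nabla u)_T - \langle v_0-v_b,\mathbb{Q}_h\nabla u\cdot\bn\rangle_{\partial T}.
\end{equation*}
Because $\nabla v_0\in[P_{k-1}(T)]^d$, the projection can be dropped in the volume integral so that $(\nabla v_0,\mathbb{Q}_h\nabla u)_T = (\nabla v_0,\nabla u)_T$. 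A standard integration by parts combined with $-\Delta u+u=0$ then rewrites this as $-(v_0,u)_T + \langle v_0,\nabla u\cdot\bn\rangle_{\partial T}$. Splitting $v_0 = (v_0-v_b) + v_b$ on $\partial T$ bundles the $v_0-v_b$ part with the other boundary term to produce $\langle v_0-v_b,(\nabla u-\mathbb{Q}_h\nabla u)\cdot\bn\rangle_{\partial T}$, while leaving a residual $\langle v_b,\nabla u\cdot\bn\rangle_{\partial T}$.

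Summing over $T\in\mathcal{T}_h$ handles the remaining pieces: the interior edge contributions in $\sum_T\langle v_b,\nabla u\cdot\bn\rangle_{\partial T}$ cancel by single-valuedness of $v_b$ and continuity of the flux $\nabla u\cdot\bn$ across interior faces, leaving only $\langle v_b,f\rangle_{\partial\Omega} = b(f,v_h)$ from the Neumann data. Meanwhile $(Q_0 u,v_0)_T - (u,v_0)_T = 0$ for each $T$ since $v_0\in P_k(T)$ and $Q_0$ is the $L^2$ projection onto $P_k(T)$, so the $(Q_0 u,v_0)$ piece of $a_w(Q_h u,v_h)$ exactly cancels the $-(u,v_0)$ term produced above. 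Assembling everything yields
\begin{equation*}
a_w(Q_h u,v_h) = \ell(u,v_h) + b(f,v_h) + s(Q_h u,v_h),
\end{equation*}
and subtracting $a_w(u_h,v_h) = b(f,v_h)$ gives the claim.

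I do not expect a real obstacle here; the proof is purely algebraic once the right identities are chained together. The only item requiring attention is bookkeeping the signs in the two successive integrations by parts (one hidden in the weak-gradient definition, one explicit on $(\nabla v_0,\nabla u)_T$) and verifying that the flux cancellation on interior edges is legitimate, which follows from $u\in H^{1+r}(\Omega)$ with $r>1/2$ (so that $\nabla u\cdot\bn$ has a well-defined single-valued trace on interior faces in $L^2$).
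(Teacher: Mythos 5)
Your proposal is correct and follows essentially the same route as the paper's proof: the commutativity identity of Lemma \ref{lemma2.2} plus the weak-gradient definition (\ref{2.3}) to turn $(\nabla_w Q_h u,\nabla_w v_h)_T$ into $(\nabla u,\nabla v_0)_T$ minus the projected-flux boundary term, the PDE tested against $v_0$ and $v_b$ to supply the exact-flux boundary term and the Neumann data, the $L^2$-projection identity $(Q_0u-u,v_0)_T=0$, and subtraction of the discrete equation. The only cosmetic difference is that you perform the integration by parts on $(\nabla v_0,\nabla u)_T$ explicitly, whereas the paper obtains the same identity by testing $-\Delta u+u=0$ with $v_0$; the content is identical.
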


\begin{proof}
It follows from the definition of the weak gradient ($\ref{2.3}$) and Lemma $\ref{lemma2.2}$ that
\begin{flalign}
\label{3.1}
&\sum\limits_{T\in \mathcal{T}_{h}}(\nabla_{w}Q_{h}u,\nabla_{w}v_{h})_{T} \nonumber\\
=&\sum\limits_{T\in \mathcal{T}_{h}}(\mathbb{Q}_{h}\nabla u,\nabla_{w}v_{h})_{T}  \nonumber\\
=&-\sum\limits_{T\in \mathcal{T}_{h}}(\nabla \cdot \mathbb{Q}_{h}\nabla u,v_{0})_{T}+\sum\limits_{T\in \mathcal{T}_{h}}(\mathbb{Q}_{h}(\nabla u)\cdot \mathbf{n},v_{b})_{T}   \nonumber\\
=&\sum\limits_{T\in \mathcal{T}_{h}}(\mathbb{Q}_{h}\nabla u,\nabla v_{0})_{T}-\sum\limits_{T\in \mathcal{T}_{h}}\langle\mathbb{Q}_{h}(\nabla u)\cdot \mathbf{n},v_{0}-v_{b}\rangle_{\partial T}   \nonumber\\
=&\sum\limits_{T\in \mathcal{T}_{h}}(\nabla u,\nabla v_{0})_{T}-\sum\limits_{T\in \mathcal{T}_{h}}\langle\mathbb{Q}_{h}(\nabla u)\cdot \mathbf{n},v_{0}-v_{b}\rangle_{\partial T}.
\end{flalign}
For the source problem (\ref{2.5}), we use $v_{0}$ and $v_{b}$ as test functions, respectively. Then we have
\begin{flalign}
\label{3.2}
&-\sum\limits_{T\in \mathcal{T}_{h}}(\Delta u,v_{0})_{T}+\sum\limits_{T\in \mathcal{T}_{h}}(u,v_{0})_{T}  \nonumber\\
=&\sum\limits_{T\in \mathcal{T}_{h}}(\nabla u,\nabla v_{0})_{T}+\sum\limits_{T\in \mathcal{T}_{h}}(u,v_{0})_{T}-\sum\limits_{T\in \mathcal{T}_{h}}\langle \nabla u \cdot \mathbf{n},v_{0}\rangle_{\partial T}=0,
\end{flalign}
and
\begin{flalign}
\label{3.3}
&\sum\limits_{T\in \mathcal{T}_{h}}\langle \nabla u \cdot \mathbf{n},v_{b}\rangle_{\partial T}=\langle \nabla u \cdot \mathbf{n},v_{b}\rangle_{\partial \Omega}=\langle f,v_{b}\rangle_{\partial \Omega}.
\end{flalign}
From $(\ref{3.2})$ and $(\ref{3.3})$, we arrive at
\begin{flalign}
\label{3.4}
\sum\limits_{T\in \mathcal{T}_{h}}(\nabla u,\nabla v_{0})_{T}+\sum\limits_{T\in \mathcal{T}_{h}}(u,v_{0})_{T}-\sum\limits_{T\in \mathcal{T}_{h}}\langle \nabla u \cdot \mathbf{n},v_{0}-v_{b}\rangle_{\partial T}=\langle f,v_{b}\rangle_{\partial \Omega}.
\end{flalign}
Combining $(\ref{3.1})$ with $(\ref{3.4})$, we can obtain
\begin{flalign*}
&a_{w}(Q_{h}u,v_{h})    \\
=&\sum\limits_{T\in \mathcal{T}_{h}}(Q_{0}u-u,v_{0})_{T}+\langle f,v_{b}\rangle_{\partial \Omega}+s(Q_{h}u,v_{h})+\ell(u,v_{h}).
\end{flalign*}
Notice that $u_{h}$ is the solution that satisfies the following equation
$$a_{w}(u_{h},v_{h})=\langle f,v_{h}\rangle_{\partial \Omega}=\langle f,v_{b}\rangle_{\partial \Omega},$$
and $u$ satisfies the following equation $$\sum\limits_{T\in \mathcal{T}_{h}}(Q_{0}u-u,v_{0})_{T}=0.$$
Thus, we have $$a_{w}(e_{h},v_{h})=\ell(u,v_{h})+s(Q_{h}u,v_{h}), \quad\forall v_{h} \in V_{h},$$
which completes the proof.
\end{proof}

\subsection{$H^1$ and $L^2$ error estimates}
Similarly, from \cite[Lemma A.2, Theorem A.1]{MR3919912}, we have the following estimates.
\begin{lemma}
\label{lemma3.2}
The following estimates hold for any $v_{h} \in V_{h}$ and $w \in H^{k+1}(\Omega)$
\begin{align*}
&\vert s(Q_{h}w,v_{h})\vert \leq C \gamma(h) h^{k}\Vert w \Vert_{k+1}\Vert v_{h} \Vert_{V},\\
&\vert \ell(w,v_{h})\vert \leq C h^{k}\Vert w \Vert_{k+1} \Vert v_{h} \Vert_{V}.
\end{align*}
\end{lemma}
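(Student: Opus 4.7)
The plan is to bound both quantities via element-wise Cauchy--Schwarz on the skeleton $\partial T$ followed by the trace inequality and the projection estimates established in Section 3.1. In each case the factor $\big(\sumT h_T^{-1}\|v_0-v_b\|_{\partial T}^2\big)^{1/2}$ is already under control by $\|v_h\|_V$ from the very definition of the $V$-norm, so the work concentrates on the factor involving $w$.

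For the stabilizer term, I would first apply the discrete Cauchy--Schwarz inequality:
\begin{equation*}
|s(Q_{h}w,v_{h})| \leq \gamma(h) \Big(\sumT h_T^{-1}\|Q_0 w - Q_b w\|_{\partial T}^2\Big)^{1/2}\Big(\sumT h_T^{-1}\|v_0-v_b\|_{\partial T}^2\Big)^{1/2}.
\end{equation*}
Then, using the triangle inequality $\|Q_0 w - Q_b w\|_{\partial T}\le \|Q_0 w - w\|_{\partial T}+\|w-Q_b w\|_{\partial T}$, I would treat the two pieces separately. For the first piece I would invoke the trace inequality and the first projection bound to obtain $h_T^{-1}\|Q_0 w-w\|_{\partial T}^2 \lesssim h_T^{-2}\|Q_0 w-w\|_T^2 + \|\nabla(Q_0 w-w)\|_T^2$, which sums to $O(h^{2k}\|w\|_{k+1}^2)$. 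For the second piece the third projection bound $\sumT\|w-Q_b w\|_{\partial T}^2\lesssim h^{2k+1}\|w\|_{k+1}^2$ supplies the same order after dividing by $h_T$. Collecting, the first factor is $O(h^k\|w\|_{k+1})$ and the prefactor $\gamma(h)$ yields the claimed bound.

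For the consistency term, the strategy is a weighted Cauchy--Schwarz:
\begin{equation*}
|\ell(w,v_h)| \leq \Big(\sumT h_T\|\nabla w-\mathbb{Q}_h\nabla w\|_{\partial T}^2\Big)^{1/2}\Big(\sumT h_T^{-1}\|v_0-v_b\|_{\partial T}^2\Big)^{1/2}.
\end{equation*}
The trace inequality applied componentwise to $\nabla w-\mathbb{Q}_h\nabla w$ gives $h_T\|\nabla w-\mathbb{Q}_h\nabla w\|_{\partial T}^2 \lesssim \|\nabla w-\mathbb{Q}_h\nabla w\|_T^2 + h_T^2\|\nabla(\nabla w-\mathbb{Q}_h\nabla w)\|_T^2$, and the second projection bound then controls the sum by $h^{2k}\|w\|_{k+1}^2$. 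Combined with $\|v_h\|_V$ this yields the second estimate.

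No genuine obstacle is anticipated; the only subtlety is matching the powers of $h_T$ on each boundary factor so that the projection inequality from Section 3.1 can be invoked in its stated form. In particular, the correct weighting $h_T^{1/2}$ versus $h_T^{-1/2}$ in the second Cauchy--Schwarz must be chosen so that the resulting volume term arising from the trace inequality aligns with the $\|\nabla w-\mathbb{Q}_h\nabla w\|_T^2 + h_T^2\|\nabla(\cdot)\|_T^2$ combination appearing in the projection bound; once this is arranged the remainder is routine.
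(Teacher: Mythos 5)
Your proposal is correct and follows essentially the same route as the argument the paper delegates to \cite[Lemma A.2, Theorem A.1]{MR3919912}: Cauchy--Schwarz on the skeleton with the $h_T^{\pm 1/2}$ weights, followed by the trace and projection inequalities of Section 3.1, with the $v_h$-factor absorbed into $\Vert v_h\Vert_V$. The only cosmetic difference is that for the stabilizer term the paper's analogous computations (e.g.\ in Lemma \ref{lemma2.5}) use the one-line bound $\Vert Q_0w-Q_bw\Vert_{\partial T}\le \Vert Q_0w-w\Vert_{\partial T}$, available because $Q_0w|_e\in P_k(e)$ and $Q_b$ is the $L^2(e)$-projection, whereas you split via the triangle inequality into $Q_0w-w$ and $w-Q_bw$; both yield the same $O(h^k)$ order.
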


\begin{theorem}
\label{theorem3.3}
Let $u\in H^{k+1}(\Omega)$ represent the exact solution of the source problem $(\ref{2.5})$, $u_{h}$ denote the numerical solution of the WG scheme $(\ref{2.6})$. Then we have the following estimate 
$$\Vert u-u_{h} \Vert_{V}\leq C\gamma(h)^{-1}h^{k}\Vert u \Vert_{k+1}.$$
\end{theorem}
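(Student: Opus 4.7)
The plan is to decompose the error via $Q_h u$ and control the two resulting pieces separately. Specifically, I would start from the triangle inequality
\begin{equation*}
\|u - u_h\|_V \le \|u - Q_h u\|_V + \|Q_h u - u_h\|_V = \|u - Q_h u\|_V + \|e_h\|_V,
\end{equation*}
so the task reduces to bounding the projection error $\|u - Q_h u\|_V$ and the discrete error $\|e_h\|_V$. The first term is handled exactly as in the proof of Lemma \ref{lemma2.5}: applying the trace inequality together with the projection inequalities of Lemma 2.3 of the excerpt yields $\|u - Q_h u\|_V \lesssim h^k \|u\|_{k+1}$, which is already stronger than the claimed bound (it is the $\gamma(h)^{-1}=1$ version).

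For the second term, the plan is to exploit the error equation together with the coercivity of $a_w$. I would test Lemma \ref{lemma3.1} with $v_h = e_h \in V_h$ to obtain
\begin{equation*}
a_w(e_h, e_h) = \ell(u, e_h) + s(Q_h u, e_h).
\end{equation*}
Lemma \ref{lemma2.3} then gives $\gamma(h)\|e_h\|_V^2 \lesssim a_w(e_h, e_h)$, while Lemma \ref{lemma3.2} bounds the right-hand side by $Ch^k \|u\|_{k+1} \|e_h\|_V$ (the $\ell$-term contributes $h^k\|u\|_{k+1}\|e_h\|_V$ and the $s$-term the smaller $\gamma(h) h^k\|u\|_{k+1}\|e_h\|_V$, so both are absorbed into the same bound up to a harmless factor since $\gamma(h)\le 1$). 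Dividing through by $\gamma(h)\|e_h\|_V$ produces
\begin{equation*}
\|e_h\|_V \lesssim \gamma(h)^{-1} h^k \|u\|_{k+1}.
\end{equation*}

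Combining the two estimates and noting that $\gamma(h)^{-1}\ge 1$, the bound on $\|e_h\|_V$ dominates, so we recover $\|u - u_h\|_V \le C\gamma(h)^{-1} h^k \|u\|_{k+1}$. I do not anticipate any real obstacle: all the required ingredients (coercivity up to the factor $\gamma(h)$, the consistency-style error equation, and the controls of $\ell$ and $s$) are already in place from Lemmas \ref{lemma2.3}, \ref{lemma3.1}, and \ref{lemma3.2}. The only mild subtlety is tracking the factor $\gamma(h)$ carefully, since the coercivity is degenerate as $h\to 0$, which is precisely why the final rate is degraded by $\gamma(h)^{-1}$ rather than being the cleaner $h^k$.
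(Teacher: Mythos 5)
Your proposal is correct and follows essentially the same route the paper intends: the paper omits an explicit proof of Theorem \ref{theorem3.3} (deferring to the analogous result in its reference), but the key step $\Vert e_h\Vert_V \leq C\gamma(h)^{-1}h^k\Vert u\Vert_{k+1}$ is obtained exactly as you describe---testing the error equation of Lemma \ref{lemma3.1} with $e_h$, invoking the coercivity of Lemma \ref{lemma2.3} and the bounds of Lemma \ref{lemma3.2}---and this very chain appears verbatim inside the paper's proof of Theorem \ref{theorem3.4}. Your handling of the projection term via the computation in Lemma \ref{lemma2.5} and the absorption of the $\gamma(h)$ factors is also consistent with the paper.
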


To derive $L^2$ error estimate, we consider the dual problem of (\ref{2.5}):
\begin{equation}
\left\{
\begin{aligned}
-\Delta{\phi}+\phi&=0,& &\quad\text{in }\Omega,&\\
\frac{\partial \phi}{\partial \mathbf{n}}&=e_b,& &\quad\text{on }\partial\Omega.&
\end{aligned}
\right.
\label{3.5}
\end{equation}
Assume the dual problem (\ref{3.5}) has $H^{1+r/2}(\Omega)$-regularity property, and the priori estimate holds true which can be found in \cite[Proposition 4.4]{MR1804656} and \cite[(4.10)]{1972Approximation}:
$$\Vert\phi\Vert_{1+r/2}\leq C\Vert e_b\Vert_X,$$
where $1/2<r\leq 1$.

\begin{theorem}
\label{theorem3.4}
Let $u\in H^{k+1}(\Omega)$ be the exact solution of $(\ref{2.5})$, $u_{h}$ be the numerical solution of the WG scheme $(\ref{2.6})$. Then we have the following estimate
$$\Vert u-u_{h} \Vert_{X}\leq C\gamma(h)^{-1}h^{k+\frac{r}{2}}\Vert u \Vert_{k+1}.$$
\end{theorem}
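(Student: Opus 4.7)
The plan is to adapt the classical Aubin--Nitsche duality argument to the WG framework, using the dual problem $(\ref{3.5})$ together with the energy estimate from Theorem $\ref{theorem3.3}$. The first step is to split
\[
\Vert u-u_h\Vert_X \le \Vert u-Q_b u\Vert_{\partial\Omega} + \Vert Q_b u-u_b\Vert_{\partial\Omega},
\]
where the first summand is already of order $h^{k+1/2}$ by the boundary projection inequality (the third inequality in the projection lemma of Section~2), so it suffices to bound $\Vert e_b\Vert_X$, where $e_h=Q_h u-u_h=\{e_0,e_b\}$.

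Next, let $\phi$ solve $(\ref{3.5})$ with boundary datum $e_b$, so that $\Vert\phi\Vert_{1+r/2}\le C\Vert e_b\Vert_X$, and let $\phi_h\in V_h$ denote its WG approximation defined by $a_w(\phi_h,v_h)=\langle e_b,v_b\rangle_{\partial\Omega}$ for every $v_h\in V_h$. Choosing $v_h=e_h$ gives $\Vert e_b\Vert_X^2=a_w(\phi_h,e_h)$. Writing $\phi_h=Q_h\phi-(Q_h\phi-\phi_h)$, using symmetry of $a_w$, and applying the error equation of Lemma $\ref{lemma3.1}$ once to $e_h$ (with source datum $u$) and once to $Q_h\phi-\phi_h$ (with source datum $\phi$) produces the identity
\[
\Vert e_b\Vert_X^2 = \ell(u,Q_h\phi) + s(Q_h u,Q_h\phi) - \ell(\phi,e_h) - s(Q_h\phi,e_h).
\]

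The four terms on the right are then estimated separately. The two involving $e_h$ are controlled by a low-regularity variant of Lemma $\ref{lemma3.2}$ in which the exponent $k$ is replaced by $r/2$ so as to reflect $\phi\in H^{1+r/2}$, yielding $|\ell(\phi,e_h)|\lesssim h^{r/2}\Vert\phi\Vert_{1+r/2}\Vert e_h\Vert_V$ and $|s(Q_h\phi,e_h)|\lesssim \gamma(h)h^{r/2}\Vert\phi\Vert_{1+r/2}\Vert e_h\Vert_V$; combined with $\Vert e_h\Vert_V\lesssim \gamma(h)^{-1}h^k\Vert u\Vert_{k+1}$ (from Theorem $\ref{theorem3.3}$ and a triangle inequality through $u-Q_h u$), this contributes $\gamma(h)^{-1}h^{k+r/2}\Vert u\Vert_{k+1}\Vert\phi\Vert_{1+r/2}$. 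The two terms with $Q_h\phi$ are handled by edgewise Cauchy--Schwarz together with the projection bounds $\sum_T h_T\Vert\nabla u-\mathbb{Q}_h\nabla u\Vert_{\partial T}^2\lesssim h^{2k}\Vert u\Vert_{k+1}^2$, $\sum_T h_T^{-1}\Vert Q_0\phi-Q_b\phi\Vert_{\partial T}^2\lesssim h^{r}\Vert\phi\Vert_{1+r/2}^2$, and the analogous bound for $Q_0 u-Q_b u$; these contribute at most $h^{k+r/2}\Vert u\Vert_{k+1}\Vert\phi\Vert_{1+r/2}$. Substituting $\Vert\phi\Vert_{1+r/2}\le C\Vert e_b\Vert_X$ and dividing by $\Vert e_b\Vert_X$ produces the claimed bound.

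The main obstacle I anticipate is the low-regularity analogue of Lemma $\ref{lemma3.2}$: bounding $\ell(\phi,\cdot)$ and $s(Q_h\phi,\cdot)$ for $\phi\in H^{1+r/2}$ with $r/2<1$ rests on the fractional projection estimate $\Vert\phi-Q_0\phi\Vert_{\partial T}^2\lesssim h^{1+r}\Vert\phi\Vert_{1+r/2,T}^2$, which follows by combining the trace inequality with fractional $L^2$-projection approximation on $T$ but is not explicitly recorded in Section~2 and must be justified in passing. Once this ingredient is in hand, the remainder is routine bookkeeping of Cauchy--Schwarz, symmetry of $a_w$, Lemma $\ref{lemma3.1}$, and the projection estimates already established.
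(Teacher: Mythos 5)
Your proposal is correct and follows essentially the same Aubin--Nitsche duality argument as the paper: the same dual problem $(\ref{3.5})$, the same four-term identity $\Vert e_b\Vert_X^2=\ell(u,Q_h\phi)+s(Q_hu,Q_h\phi)\mp\ell(\phi,e_h)-s(e_h,Q_h\phi)$ (the paper reaches it by testing the dual PDE with $e_0,e_b$ and invoking the integration-by-parts identity $(\ref{3.1})$, whereas you route through the discrete dual solution $\phi_h$ and apply Lemma $\ref{lemma3.1}$ twice with symmetry of $a_w$ --- an algebraically equivalent path, and your sign on $\ell(\phi,e_h)$ is in fact the consistent one, though this is immaterial since only absolute values are estimated), and the same term-by-term bounds. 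The fractional-regularity variant of Lemma $\ref{lemma3.2}$ that you flag as needing justification is used tacitly by the paper as well in $(\ref{3.18})$--$(\ref{3.19})$, so this is not a gap relative to the paper's own argument.
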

\begin{proof}
Testing $(\ref{3.5})$ by $e_{0}$ and $e_{b}$, respectively, we obtain
\begin{flalign}
\label{3.6}
&\sum\limits_{T\in \mathcal{T}_{h}}-(\Delta \phi,e_{0})_{T}+\sum\limits_{T\in \mathcal{T}_{h}}(\phi,e_{0})        \nonumber\\
=&\sum\limits_{T\in \mathcal{T}_{h}}-(\nabla \phi,\nabla e_{0})_{T}+\sum\limits_{T\in \mathcal{T}_{h}}(\phi,e_{0})-\sum\limits_{T\in \mathcal{T}_{h}}\langle \nabla\phi \cdot \mathbf{n},e_{0}\rangle_{\partial T}\nonumber\\
=&0,
\end{flalign}
and
\begin{equation}
\label{3.7}
\sum\limits_{T\in \mathcal{T}_{h}}\langle \nabla\phi \cdot \mathbf{n},e_{b}\rangle_{\partial T}=\Vert e_b\Vert_{X}^{2}.
\end{equation}
Combining (\ref{3.6}) with (\ref{3.7}), we arrive at
\begin{equation}
\label{3.8}
\Vert e_{b} \Vert_{X}^{2}=\sum\limits_{T\in \mathcal{T}_{h}}(\nabla\phi,\nabla e_{0})+\sum\limits_{T\in \mathcal{T}_{h}}(\phi,e_{0})-\sum\limits_{T\in \mathcal{T}_{h}}\langle \nabla\phi \cdot \mathbf{n},e_{0}-e_{b}\rangle_{\partial T}.
\end{equation}
It follows from $(\ref{3.1})$ and $(\ref{3.8})$ that
\begin{equation}
\label{3.9}
\Vert e_{b} \Vert_{X}^{2}=(\nabla_{w}e_{h},\nabla_{w}Q_{h}\phi)+\ell(\phi,e_{h})+(\phi,e_{0}).
\end{equation}
From Lemma $\ref{lemma3.1}$, we have
\begin{equation}
\label{3.10}
(\nabla_{w}e_{h},\nabla_{w}Q_{h}\phi)=\ell(u,Q_{h}\phi)+s(Q_{h}\phi,Q_{h}u)-s(e_{h},Q_{h}\phi)-(e_{0},Q_{0}\phi).
\end{equation}
Combining $(\ref{3.9})$ with $(\ref{3.10})$, we obtain
\begin{equation}
\label{3.11}
\Vert e_{b} \Vert_{X}^{2}=\ell(u,Q_{h}\phi)+s(Q_{h}u,Q_{h}\phi)+\ell(\phi,e_{h})-s(e_{h},Q_{h}\phi).
\end{equation}
Let's analyze the four terms appearing on the right-hand side of equation $(\ref{3.11})$. We can get the following estimate by applying the trace inequality and the projection inequality
\begin{flalign}
\label{3.12}
\vert \ell(u,Q_{h}\phi)\vert=&\left\vert \sum\limits_{T\in \mathcal{T}_{h}}\langle (\nabla u-\mathbb{Q}_{h}\nabla u)\cdot \mathbf{n},Q_{0}\phi-Q_{b}\phi\rangle_{\partial T}\right\vert    \nonumber\\
\leq&\left\vert \sum\limits_{T\in \mathcal{T}_{h}}\langle (\nabla u-\mathbb{Q}_{h}\nabla u)\cdot \mathbf{n},Q_{0}\phi-\phi\rangle_{\partial T}\right\vert\nonumber\\
\leq&\sum\limits_{T\in \mathcal{T}_{h}}\Vert \nabla u-\mathbb{Q}_h\nabla u\Vert_{\partial T}\Vert Q_0\phi-\phi \Vert_{\partial T} \nonumber\\
\leq& \left(\sum\limits_{T\in \mathcal{T}_{h}}\Vert \nabla u-\mathbb{Q}_h\nabla u\Vert^2_{\partial T}\right)^{\frac{1}{2}}\left(\sum\limits_{T\in \mathcal{T}_{h}}\Vert Q_0\phi-\phi \Vert^2_{\partial T}\right)^{\frac{1}{2}} \nonumber\\
\leq&Ch^{k+\frac{r}{2}}\Vert u\Vert_{k+1}\Vert \phi \Vert_{1+r/2}.
\end{flalign}
Similarly, we have
\begin{flalign}
\label{3.17}
\vert s(Q_hu,Q_h\phi)\vert&=\gamma(h)\sum\limits_{T\in \mathcal{T}_{h}}h_T^{-1}\vert \langle Q_0u-Q_bu,Q_0\phi-Q_b\phi \rangle \vert  \nonumber\\
&\leq \gamma(h)\sum\limits_{T\in \mathcal{T}_{h}}h_T^{-1}\Vert Q_0u-u\Vert_{\partial T} \Vert Q_0\phi-\phi \Vert_{\partial T}  \nonumber\\
&\leq \gamma(h)\left(\sum\limits_{T\in \mathcal{T}_{h}}h_T^{-1}\Vert Q_0u-u\Vert^2_{\partial T}\right)^{\frac{1}{2}} \left(\sum\limits_{T\in \mathcal{T}_{h}}h_T^{-1}\Vert Q_0\phi-\phi \Vert^2_{\partial T}\right)^{\frac{1}{2}}  \nonumber\\
&\leq C\gamma(h)h^{k+\frac{r}{2}}\Vert u\Vert_{k+1}\Vert \phi \Vert_{1+r/2}.
\end{flalign}
It follows from Lemmas $\ref{lemma2.3}$ and $\ref{lemma3.2}$ that
$$\Vert e_h\Vert_V \leq C\gamma(h)^{-1}h^k\Vert u\Vert_{k+1}.$$
From Lemma $\ref{lemma3.2}$, the following estimates hold
\begin{flalign}
\label{3.18}
\vert \ell(\phi,e_h)\vert\leq Ch^{\frac{r}{2}}\Vert \phi \Vert_{1+r/2}\Vert e_h\Vert_V \leq C\gamma(h)^{-1}h^{k+\frac{r}{2}}\Vert u\Vert_{k+1}\Vert \phi \Vert_{1+r/2},
\end{flalign}
\begin{flalign}
\label{3.19}
\vert s(e_h,Q_h\phi)\vert\leq C\gamma(h)h^{\frac{r}{2}}\Vert \phi \Vert_{1+r/2}\Vert e_h\Vert_V\leq Ch^{k+\frac{r}{2}}\Vert u\Vert_{k+1}\Vert \phi \Vert_{1+r/2}.
\end{flalign}
To sum up, combining $(\ref{3.11})$-$(\ref{3.19})$ leads to the following estimate
\begin{equation}
\label{3.20}
\Vert e_{b}\Vert_{X}^{2}\leq C\gamma(h)^{-1}h^{k+\frac{r}{2}}\Vert u\Vert_{k+1}\Vert \phi \Vert_{1+r/2}.
\end{equation}
In addition, the solution $\phi$ of the dual problem $(\ref{3.5})$ satisfies $\Vert \phi\Vert_{1+r/2}\leq C\Vert e_{b}\Vert_{X}$, so we have
\begin{equation}
\label{3.21}
\Vert e_{b}\Vert_{X}\leq C\gamma(h)^{-1}h^{k+\frac{r}{2}}\Vert u\Vert_{k+1}.
\end{equation}
Combining Lemma $\ref{lemma2.5}$ with $(\ref{3.21})$ yields
$$\Vert u-u_{h}\Vert_{X}\leq \Vert u-Q_{h}u\Vert_{X}+\Vert Q_{h}u-u_{h}\Vert_{X} \leq C\gamma(h)^{-1}h^{k+\frac{r}{2}}\Vert u\Vert_{k+1}.$$
Then we complete the proof.
\end{proof}

\section{Guaranteed lower bounds}
In this section, we give a new numerical scheme and prove the GLB theorem.

Let $\mathcal{T}_h$ be a regular triangulation of the domain $\Omega \subset \mathbb{R}^d~(d=2, 3)$ into simplices, $\alpha$ be a global parameter that satisfies $\alpha>0$.
Next we give the another WG algorithm of (\ref{2.1}).

\begin{algorithm1}
Find $(\lambda_h, u_h)\in\Real\times V_h$ such that $\|u_h\|_b=1$ and
\begin{eqnarray}\label{4.1}
a_w(u_h,v_h)=\lambda_h b_w(u_h,v_h),\quad\forall v_h\in V_h,
\end{eqnarray}
\end{algorithm1}
where
\begin{align*}
&a_w(v_{h},w_{h})=(\nabla_w v_{h},\nabla_w w_{h})+(v_{0},w_{0})+s(v_{h},w_{h}),\\
&b_w(v_{h},w_{h})=\langle v_{b},w_{b}\rangle_{L^2(\partial \Omega)},\\
&s(v_{h},w_{h})=\frac{\alpha}{d+1}\sum\limits_{T\in \mathcal{T}_h}\sum\limits_{e\in \mathcal{E}(T)}h_{T}^{-2}\vert e\vert^{-1}\vert T\vert \langle v_{0}-v_{b},w_{0}-w_{b}\rangle_{L^2(e)}.
\end{align*}

Positive constants $\delta$ and $\Lambda$ are related to the maximal diameter $h_{\max}$ in the shape-regular triangulation $\mathcal{T}_h$, then we have the following two fundamental estimates
\begin{align}
&\Vert (I-Q_b)f\Vert^2_{X} \leq \delta \Vert (I-\mathbb{Q}_h)\nabla f\Vert^2, \quad \forall f \in H^1(\Omega), \tag{A}\label{A} \\
&s(Q_{h}f,Q_{h}f) \leq \alpha \Lambda \Vert (I-\mathbb{Q}_h)\nabla f\Vert^2, \quad \forall f \in H^1(\Omega).\tag{B}\label{B}
\end{align}

\begin{proof}
From  \cite[Proposition 4.3]{2020A}, we have
\begin{align*}
    \Vert (I-Q_b)f\Vert_X^2 &\leq \Vert (I-Q_0)f\Vert_X^2\\
    &\leq \delta_1\Vert \nabla(I-Q_0)f\Vert^2\\
    &\leq C_{st}^2\delta_1\Vert(I-\mathbb{Q}_h)\nabla f\Vert^2\\
    &=\delta \Vert(I-\mathbb{Q}_h)\nabla f\Vert^2.
\end{align*}
It follows from \cite[Theorem 4.1]{2020A} that
$$s(Q_{h}f,Q_{h}f) \leq \alpha \Lambda \Vert (I-\mathbb{Q}_h)\nabla f\Vert^2.$$
Then we complete the proof.
\end{proof}

\begin{remark}
In the above proof, $\Lambda, \,C_{st}, \,\delta_1$ are positive constants, and when $k=1$, $\Lambda, \,\delta_1$ have explicit expressions. The details can be seen in \cite[Theorem 3.1, Proposition 4.3]{2020A}.
\end{remark}

Let $\lambda$ be the $j$-th exact eigenvalue satisfying (\ref{2.2}), $\lambda_h$ be the $j$-th discrete eigenvalue satisfying (\ref{4.1}).
\begin{theorem}
(GLB). If $\delta$, $\Lambda$ and $\alpha>0$ satisfy either $(1)$ $\delta\lambda+\alpha\Lambda \leq1$ or $(2)$ $\delta\lambda_h+\alpha\Lambda \leq1$, then $\lambda_h \leq \lambda.$
\end{theorem}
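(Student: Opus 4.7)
The natural strategy is to combine the Courant--Fischer min-max characterisation of eigenvalues for both (\ref{2.2}) and (\ref{4.1}) with the two Pythagoras-type decompositions that underlie (\ref{A}) and (\ref{B}). Take the trial subspace
\[
W_h \;:=\; Q_h\bigl(\operatorname{span}\{u_1,\dots,u_j\}\bigr)\subset V_h,
\]
which is $j$-dimensional once $h$ is small enough (the $u_i$ are smooth Steklov eigenfunctions, so the projection $Q_h$ is injective on the span). Every $v_h\in W_h$ is of the form $Q_h f$ for $f\in E_\lambda:=\operatorname{span}\{u_1,\dots,u_j\}$, and on $E_\lambda$ the continuous min-max gives $a(f,f)\le\lambda\,b(f,f)$.

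\emph{The key Rayleigh-quotient estimates.} Setting $\eta:=\|(I-\mathbb{Q}_h)\nabla f\|^2$, I would use the commutativity $\nabla_w Q_h f=\mathbb{Q}_h\nabla f$ (Lemma~\ref{lemma2.2}) together with $L^2$-orthogonality of the projections ($\|\mathbb{Q}_h\nabla f\|^2=\|\nabla f\|^2-\eta$, $\|Q_0f\|^2\le\|f\|^2$, $\|Q_bf\|_X^2=\|f\|_X^2-\|(I-Q_b)f\|_X^2$) and the stabiliser bound (\ref{B}) and the trace bound (\ref{A}) to derive
\begin{align*}
a_w(Q_h f,Q_h f) &\;\le\; a(f,f)\;-\;(1-\alpha\Lambda)\,\eta,\\
b_w(Q_h f,Q_h f) &\;\ge\; b(f,f)\;-\;\delta\,\eta.
\end{align*}

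\emph{Case (1): $\delta\lambda+\alpha\Lambda\le1$.} Subtracting $\lambda$ times the second bound from the first yields
\[
a_w(Q_hf,Q_hf)-\lambda\,b_w(Q_hf,Q_hf)\;\le\;\underbrace{\bigl(a(f,f)-\lambda\,b(f,f)\bigr)}_{\le\,0}\;-\;\underbrace{(1-\alpha\Lambda-\delta\lambda)}_{\ge\,0}\,\eta\;\le\;0,
\]
so the Rayleigh quotient on $W_h$ is everywhere bounded by $\lambda$. Applying the discrete min-max with trial space $W_h$ then gives $\lambda_h\le\lambda$. \emph{Case (2): $\delta\lambda_h+\alpha\Lambda\le1$.} The same manipulation with $\lambda_h$ in place of $\lambda$ (discarding the non-positive $\eta$ term rather than both) produces
\[
a_w(Q_hf,Q_hf)-\lambda_h\,b_w(Q_hf,Q_hf)\;\le\;a(f,f)-\lambda_h\,b(f,f).
\]
Pick $f^*\in E_\lambda$ attaining $\max_{v_h\in W_h}a_w(v_h,v_h)/b_w(v_h,v_h)$; by discrete min-max this maximum is at least $\lambda_h$, so the left-hand side is non-negative at $f^*$, giving $\lambda_h\,b(f^*,f^*)\le a(f^*,f^*)\le\lambda\,b(f^*,f^*)$ by continuous min-max. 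Cancelling $b(f^*,f^*)>0$ yields $\lambda_h\le\lambda$.

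\emph{Main obstacle.} Once (\ref{A}) and (\ref{B}) are in hand, the argument is essentially algebraic; the genuine technical points are (i) justifying $\dim W_h=j$ so that discrete min-max applies with the correct index---this requires injectivity of $Q_h$ on $E_\lambda$ and is harmless for $h$ sufficiently small thanks to the convergence results of Section~3---and (ii) ensuring $b_w(Q_hf,Q_hf)>0$ at the maximiser so the sign analysis in case~(2) is legitimate, which again follows because $\delta\eta\to0$ while $b(f,f)$ stays bounded below on the unit sphere of $E_\lambda$.
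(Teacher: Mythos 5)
Your core argument --- project the exact eigenspace into $V_h$, use Lemma~\ref{lemma2.2} together with the orthogonality of the $L^2$ projections and the bounds $(\ref{A})$, $(\ref{B})$ to get $a_w(Q_hf,Q_hf)\le a(f,f)-(1-\alpha\Lambda)\,\eta$ and $b_w(Q_hf,Q_hf)\ge b(f,f)-\delta\,\eta$, and then feed these into the discrete min-max --- is exactly the paper's Steps three through six, and your algebra in both cases (1) and (2) is correct and, in case (1), arguably cleaner than the paper's final manipulation.

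There is, however, a genuine gap at precisely the two points you defer to ``$h$ sufficiently small.'' The theorem carries no hypothesis on the mesh size: its whole purpose (in contrast to Section~3) is a \emph{guaranteed}, non-asymptotic bound under the purely algebraic conditions (1) or (2). Appealing to the convergence theory to get $\dim W_h=j$, and to ``$\delta\eta\to0$'' to get $b_w(Q_hf^*,Q_hf^*)>0$, silently converts the statement back into an asymptotic one. Both points can and must be settled non-asymptotically, and the paper does so: first reduce to the case $\delta\lambda<1$ (if $\delta\lambda\ge1$ then condition (1) is vacuous and condition (2) gives $\delta\lambda_h\le1-\alpha\Lambda\le1\le\delta\lambda$, hence $\lambda_h\le\lambda$ outright); then show by contradiction that $Q_b\phi_1,\dots,Q_b\phi_j$ are linearly independent on $\partial\Omega$, since a normalized $\psi$ in the exact eigenspace with $Q_b\psi=0$ would satisfy $1=\Vert(I-Q_b)\psi\Vert_X^2\le\delta\Vert(I-\mathbb{Q}_h)\nabla\psi\Vert^2\le\delta\Vert\psi\Vert_1^2\le\delta\lambda<1$. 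This single argument delivers both $\dim W_h=j$ and the strict positivity of $b_w$ on $W_h\setminus\{0\}$. Note that your proposed route via injectivity of $Q_h$ on the eigenspace would not even suffice for your obstacle (ii): $Q_hf\neq0$ does not imply $Q_bf\neq0$ on $\partial\Omega$, and $b_w$ only sees the boundary trace. The fix requires nothing beyond the tools you already invoke --- estimate $(\ref{A})$ and the continuous min-max --- so the repair is local, but as written the proposal does not prove the theorem as stated.
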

\begin{proof}
The proof of this theorem is similar to that of \cite[Theorem 5.1]{2020A}.\\
\textit{Step one.} If $\delta \lambda \geq 1$, the first condition (1) is impossible. So when the second condition (2) holds, we can obtain $\lambda_h \leq \lambda$. Therefore, for the remaining steps of this proof, we only consider case $\delta \lambda<1$.\\
\textit{Step two.}~Let $\phi_1,\cdots,\phi_j$ be the first $j\in \mathbb{N}$ exact eigenfunctions of the Steklov eigenvalue problem (\ref{2.2}). Let $\lambda=\lambda_j$ be the $j$-th exact eigenvalue. Next, we prove that $Q_b\phi_1,\cdots,Q_b\phi_j$ are linearly independent in $P_k(\mathcal{E}_h)$. We assume that they are linearly dependent, then suppose that $\psi \in {\rm{span}}\{ \phi_1,\cdots,\phi_j\}$ satisfies $\Vert \psi \Vert_{X} = 1$ and $Q_b\psi = 0$ on $\partial \Omega$.
It follows from (\ref{A}) that
\begin{align*}
    1=\Vert \psi \Vert_{X}^2=\Vert (I-Q_b)\psi\Vert_{X}^2 \leq \delta \Vert (I-\mathbb{Q}_h)\nabla \psi\Vert^2.
\end{align*}
According to the orthogonality of the projection operator, we have
\begin{equation}
\label{4.2}
     \Vert{\psi}\Vert_1^2=\Vert \psi \Vert^2+\Vert \nabla \psi\Vert^2=\Vert \psi \Vert^2+\Vert \mathbb{Q}_h\nabla \psi\Vert^2+\Vert (I-\mathbb{Q}_h)\nabla \psi\Vert^2.
\end{equation}
Thus, we can obtain
$$1\leq \delta \Vert{\psi}\Vert_1^2.$$
On the other hand, it follows from the minimum-maximum principle on the exact eigenvalues that
$$\Vert{\psi}\Vert_1^2 \leq \lambda \Vert \psi\Vert_{X}^2=\lambda.$$
From the above two inequalities, we have $1 \leq \delta \lambda$.
This contradicts $\delta \lambda < 1$.\\
\textit{Step three.} Step two proves that $Q_b\phi_1,\cdots,Q_b\phi_j$ are linearly independent in $P_k(\mathcal{E}_h)$. So $U_h$=span$\{Q_h\phi_1,\cdots,Q_h\phi_j\}$ is a linear subspace of $V_h$ with dimension $j$. Let $\mathcal{U}(j)$ denote the set of
all subspaces of $V_h$ with dimension $j$. So the minimum-maximum principle characterizes the $j$-th discrete eigenvalue $\lambda_h$ as
\begin{equation}
\label{4.3}
    \lambda_h=\mathop{\min}\limits_{U \in \mathcal{U}(j)}\mathop{\max}\limits_{v_h \in U\setminus \{0\}}\frac{a_w(v_h,v_h)}{b_w(v_h,v_h)}.
\end{equation}
Therefore, $\lambda_h$ is a lower bound for the maximal quotient $a_h(v_h,v_h)/b_h(v_h,v_h)$ among all nonzero $v_h \in U_h$. In the finite dimensional space, the maximum can be obtained, so there exists $\phi \in$ span $\{\phi_1,\cdots,\phi_j\}$ that satisfies $\Vert \phi \Vert_{X}=1$ and
\begin{equation}
\label{4.4}
    \lambda_h b_w(Q_h\phi,Q_h\phi) \leq a_w(Q_h\phi,Q_h\phi).
\end{equation}
In addition, it follows from the minimum-maximum principle on the exact eigenvalues that $\Vert \phi \Vert_1^2 \leq \lambda$.\\
\textit{Step four.} We estimate the lower bound of $b_w(Q_h\phi,Q_h\phi)$. Recall that $\Vert \phi \Vert_{X}=1$, we have
\begin{equation*}
    b_w(Q_h\phi,Q_h\phi)=b_w(Q_b\phi,Q_b\phi)=\Vert Q_b\phi \Vert_{X}^2=1-\Vert(I-Q_b)\phi \Vert_{X}^2.
\end{equation*}
Combining the above equation with (\ref{A}), we arrive at
\begin{equation}
\label{4.5}
    1-\delta\Vert (I-\mathbb{Q}_h)\nabla \phi\Vert^2 \leq b_w(Q_h\phi,Q_h\phi).
\end{equation}
\textit{Step five.} We estimate the upper bound of $a_w(Q_h\phi,Q_h\phi)$.
It follows from Lemma \ref{lemma2.2} that $\mathbb{Q}_h\nabla \phi=\nabla_wQ_h\phi$. This leads to
\begin{equation}
\label{4.6}
    a_w(Q_h\phi,Q_h\phi)=\Vert \mathbb{Q}_h\nabla \phi\Vert^2+\Vert Q_0\phi \Vert^2+s(Q_h\phi,Q_h\phi).
\end{equation}
It follows from (\ref{4.6}), (\ref{B}), (\ref{4.2}) and $\Vert \phi\Vert_1^2\leq 1$ that
\begin{equation}
\label{4.7}
    a_w(Q_h\phi,Q_h\phi)+\Vert (I-\mathbb{Q}_h)\nabla \phi\Vert^2  \leq \lambda +\alpha\Lambda\Vert (I-\mathbb{Q}_h)\nabla \phi\Vert^2.
\end{equation}
\textit{Step six.} Combining (\ref{4.4}) with (\ref{4.5}) and (\ref{4.7}), we have
\begin{equation}
\label{4.8}
    (1-\alpha\Lambda-\delta\lambda_h)\Vert(I-\mathbb{Q}_h)\nabla \phi \Vert^2 \leq \lambda-\lambda_h.
\end{equation}
In case (2), the left-hand side of (\ref{4.8}) is nonnegative. This shows that $\lambda-\lambda_h \geq 0$. In case (1), from (\ref{4.8}), we can obtain
\begin{equation}
    \label{4.9}
    \delta(\lambda-\lambda_h)\Vert (I-\mathbb{Q}_h)\nabla\phi\Vert^2 \leq \lambda-\lambda_h.
\end{equation}
Combining (\ref{4.2}) with $\Vert \phi \Vert_1^2\leq \lambda$, we have $\delta\Vert(I-\mathbb{Q}_h)\nabla \phi \Vert^2 \leq \delta \lambda <1$. This shows that $\lambda-\lambda_h$ must be nonnegative.
Then we complete the proof.
\end{proof}

\section{Numerical experiments}In this section, we present some numerical experiments
to demonstrate the accuracy and lower bound property of the WG scheme $(\ref{2.4})$.
\subsection{Square domain}
In this example, we consider the Steklov eigenvalue  problem $(\ref{2.1})$ on the square domain $\Omega=(0,1)^2$. The parameter $\gamma(h)$ in $(\ref{2.4})$ is selected as $h^{0.1}$. The degree $k$ of the polynomial space is selected as 1 or 2, respectively. As we lack access to the exact eigenvalues, we have utilized the following values as the first four exact eigenvalues in our numerical assessments: $\lambda_{1}=0.2400790854320629$, $\lambda_{2}=1.492303134033900$, $\lambda_{3}=1.492303134115401$, and $\lambda_{4}=2.082647053961881$. The outcomes of our numerical experiments can be found in Tables $\ref{table5.1}$ - $\ref{table5.2}$, while visual representations of the first four eigenfunctions can be seen in Figs $\ref{figure5.1}$ - $\ref{figure5.4}$.

\begin{table}[htbp]
\centering
\caption{$\Omega=(0,1)^{2}$, $k=1$, $\gamma(h)=h^{0.1}$.}
 \label{table5.1}
\renewcommand\arraystretch{1}
\begin{tabular}{|c | c |c | c |c |}
\hline
$h$ & 1/8 & 1/16 & 1/32 & 1/64   \\ [0ex]
 \hline\hline
 $\lambda_{1}-\lambda_{1,h}$ & \makecell[c]{8.0705e-4} &   \makecell[c]{2.1839e-4}   &  \makecell[c]{5.8965e-5}   &  \makecell[c]{1.5905e-5} \\
 \hline
 order &   & \makecell[c]{1.8858}   &  \makecell[c]{1.8890}    & \makecell[c]{1.8903}\\
 \hline
 $\lambda_{2}-\lambda_{2,h}$ & \makecell[c]{1.6157e-02}  &   \makecell[c]{2.9073e-03}   &  \makecell[c]{6.4444e-04}  &   \makecell[c]{1.5391e-04} \\
 \hline
 order &  & \makecell[c]{2.4744}   &  \makecell[c]{2.1736}    & \makecell[c]{2.0659}\\
 \hline
 $\lambda_{3}-\lambda_{3,h}$ &  \makecell[c]{1.4404e-02}  &     \makecell[c]{2.7983e-03}   &  \makecell[c]{6.3785e-04}    & \makecell[c]{1.5357e-04}\\
 \hline
 order &   & \makecell[c]{2.3638}  &   \makecell[c]{2.1333}  &  \makecell[c]{2.0543}\\
 \hline
 $\lambda_{4}-\lambda_{4,h}$ & \makecell[c]{7.3939e-02}  &   \makecell[c]{1.0259e-02}   &  \makecell[c]{2.1026e-03}   &  \makecell[c]{4.8367e-04}
\\
 \hline
 order &   & \makecell[c]{2.8494}   & \makecell[c]{2.2867} &   \makecell[c]{2.1201}\\
 \hline
\end{tabular}
\end{table}

\begin{table}[htbp]
\centering
\caption{$\Omega=(0,1)^{2}$, $k=2$, $\gamma(h)=h^{0.1}$.}
 \label{table5.2}
 \renewcommand\arraystretch{1}
\begin{tabular}{|c | c |c | c |c |}
 \hline
$h$ & 1/8 & 1/16 & 1/32 & 1/64  \\ [0ex]
 \hline\hline
 $\lambda_{1}-\lambda_{1,h}$ & \makecell[c]{1.8201e-08}    & \makecell[c]{1.1757e-09}  &   \makecell[c]{8.0361e-11}  &   \makecell[c]{4.8654e-12} \\
 \hline
 order &   & \makecell[c]{3.9524}   &  \makecell[c]{3.8709}   & \makecell[c]{4.0459}\\
 \hline
 $\lambda_{2}-\lambda_{2,h}$ & \makecell[c]{3.5680e-05}    & \makecell[c]{2.0584e-06}  &   \makecell[c]{1.2589e-07}  &   \makecell[c]{7.4144e-09} \\
 \hline
 order &  & \makecell[c]{4.1155}  &   \makecell[c]{4.0312}   & \makecell[c]{4.0858}\\
 \hline
 $\lambda_{3}-\lambda_{3,h}$ &  \makecell[c]{2.9710e-05}   &  \makecell[c]{1.7377e-06}   &  \makecell[c]{1.0639e-07}    & \makecell[c]{6.2445e-09}\\
 \hline
 order &   & \makecell[c]{4.0957}   &  \makecell[c]{4.0297}   & \makecell[c]{4.0907}\\
 \hline
 $\lambda_{4}-\lambda_{4,h}$ & \makecell[c]{3.9009e-06}   & \makecell[c]{2.4281e-07}  &   \makecell[c]{1.5940e-08}  &   \makecell[c]{9.9535e-10}\\
 \hline
 order &   & \makecell[c]{4.0059}  &  \makecell[c]{3.9291}   &  \makecell[c]{4.0013}\\
 \hline
\end{tabular}
\end{table}

\begin{figure}[htbp]
\centering
\begin{minipage}[t]{0.48\textwidth}
\centering
\includegraphics[width=6cm]{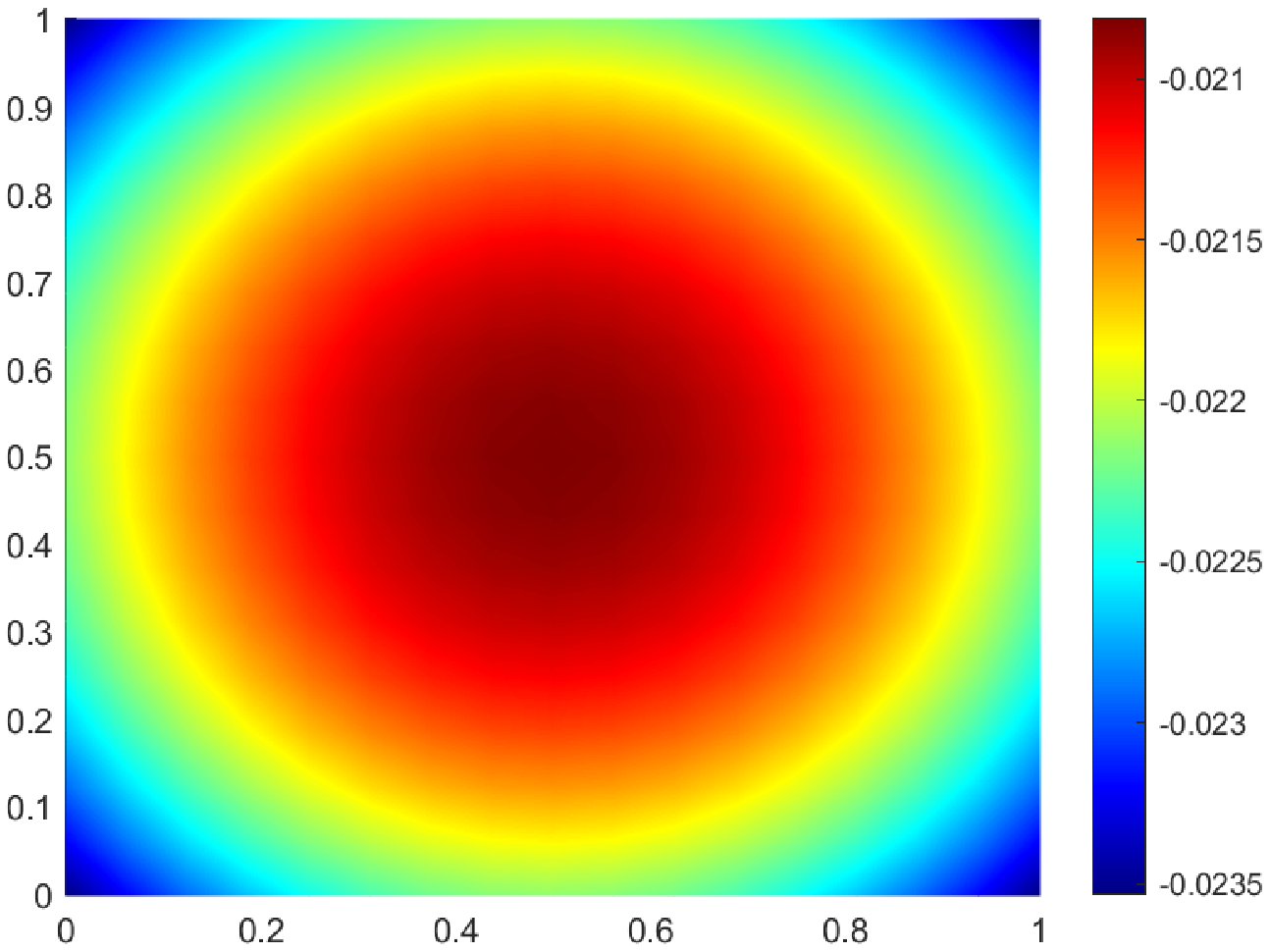}
\caption{$\Omega=(0,1)^2$, $k=1$, $\gamma(h)=h^{0.1}$, $h=1/16$, the first eigenfunction.}
\label{figure5.1}
\end{minipage}
\begin{minipage}[t]{0.48\textwidth}
\centering
\includegraphics[width=6cm]{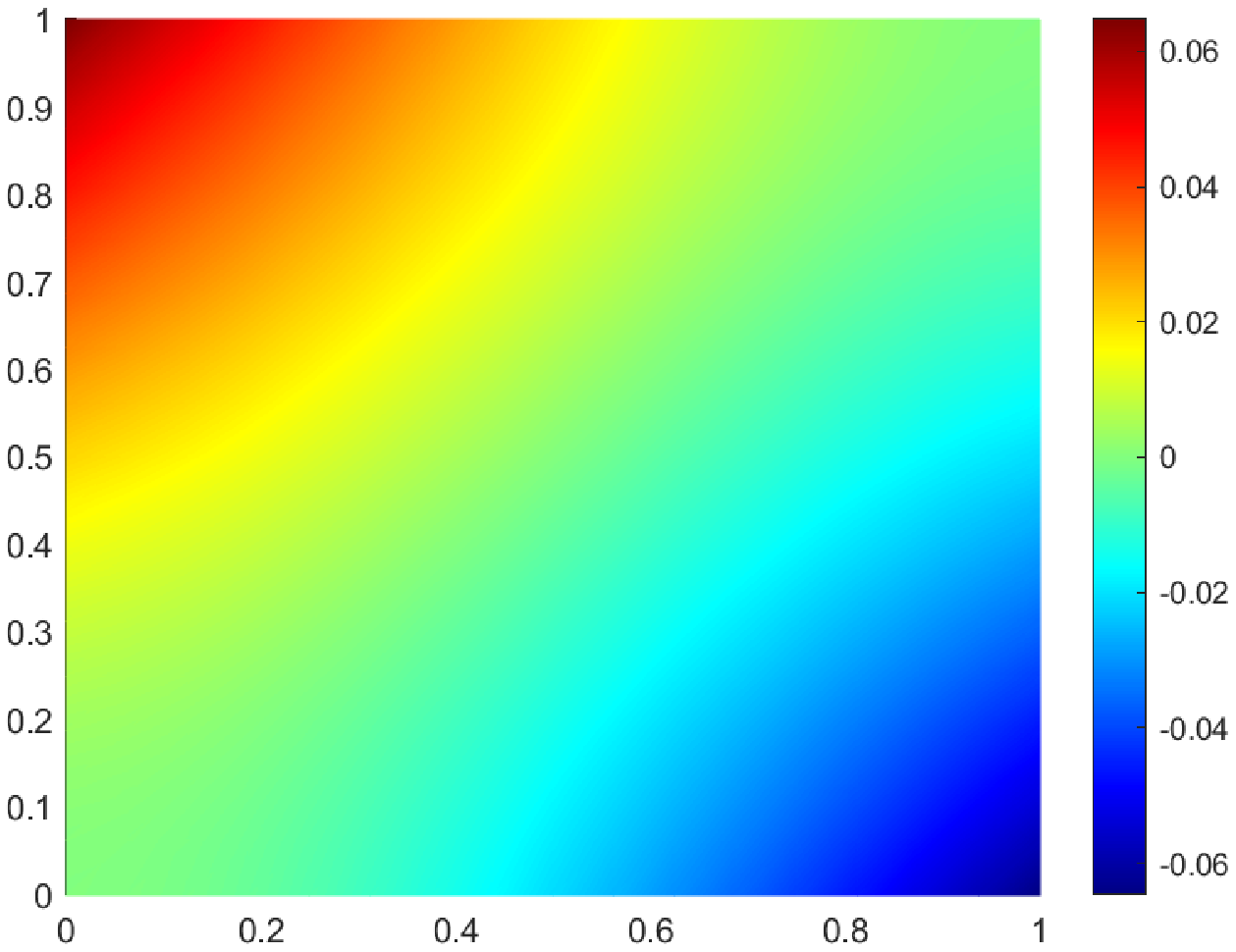}
\caption{$\Omega=(0,1)^2$, $k=1$, $\gamma(h)=h^{0.1}$, $h=1/16$, the second eigenfunction.}
\label{figure5.2}
\end{minipage}
\begin{minipage}[t]{0.48\textwidth}
\centering
\includegraphics[width=6cm]{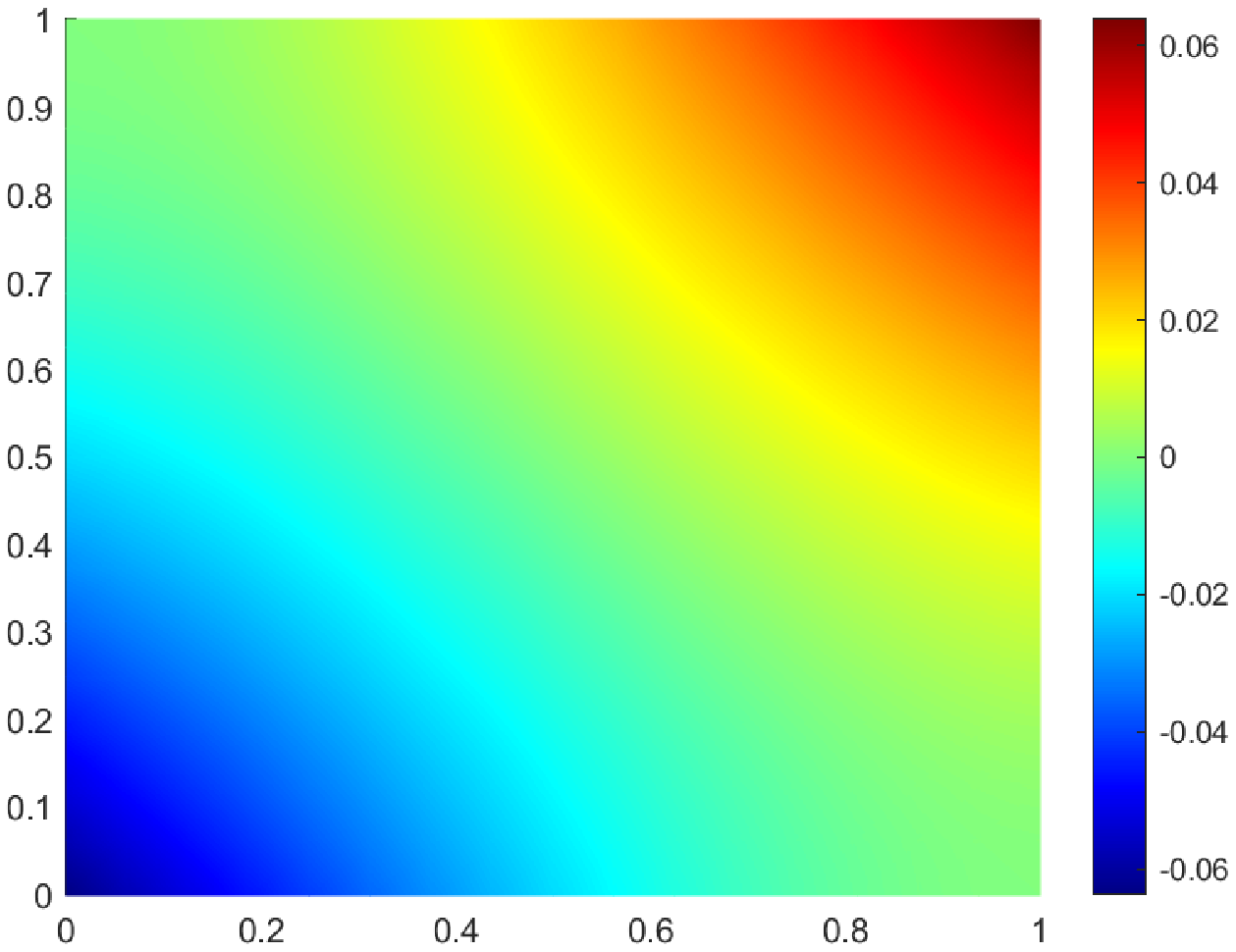}
\caption{$\Omega=(0,1)^2$, $k=1$, $\gamma(h)=h^{0.1}$, $h=1/16$, the third eigenfunction.}
\label{figure5.3}
\end{minipage}
\begin{minipage}[t]{0.48\textwidth}
\centering
\includegraphics[width=6cm]{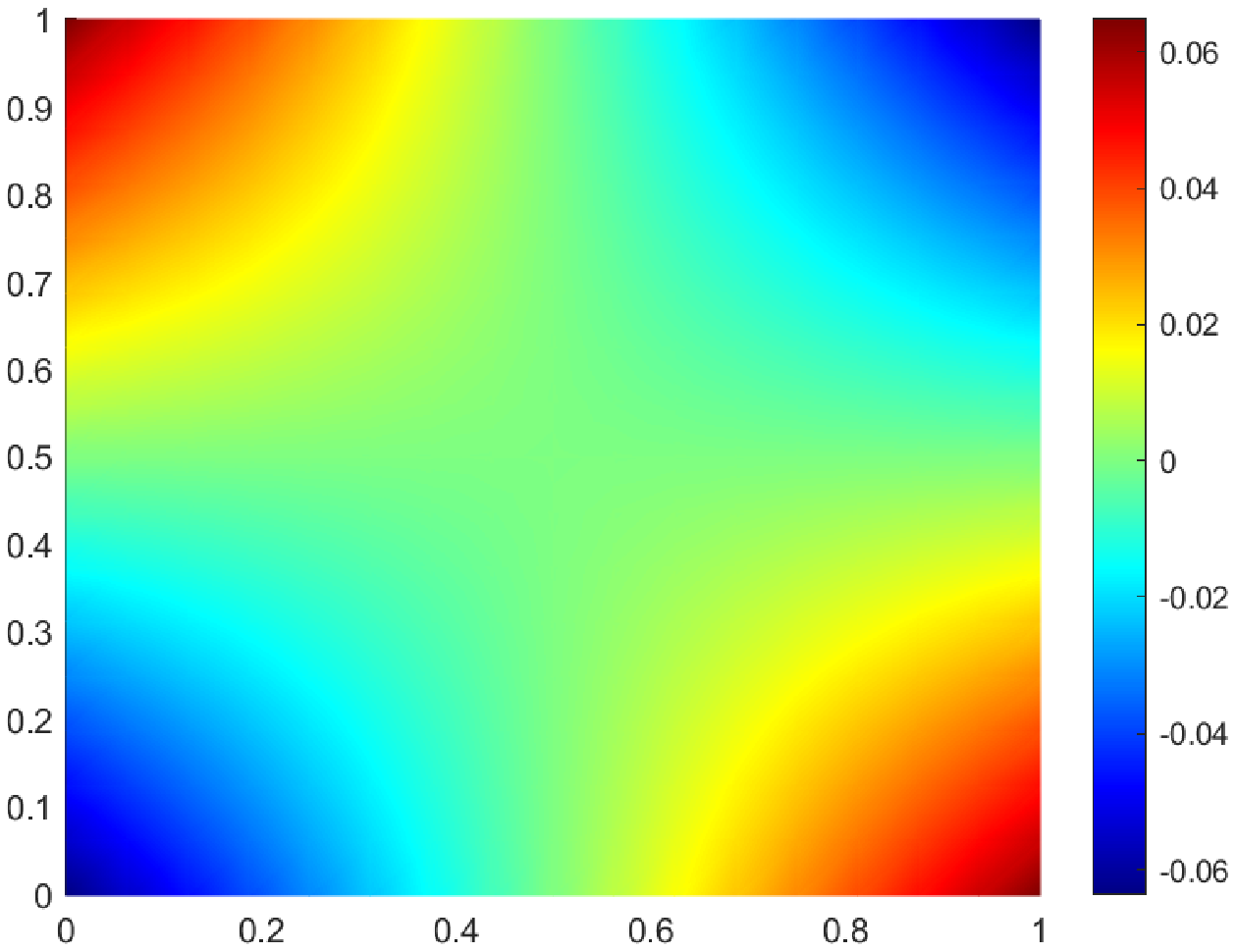}
\caption{$\Omega=(0,1)^2$, $k=1$, $\gamma(h)=h^{0.1}$, $h=1/16$, the fourth eigenfunction.}
\label{figure5.4}
\end{minipage}
\end{figure}

The data in Tables $\ref{table5.1}$ - $\ref{table5.2}$ indicates that the eigenvalues obtained by the
WG method serve as asymptotic lower bound approximations for the exact eigenvalues of the Steklov eigenvalue problem $(\ref{2.1})$ on the square domain $\Omega=(0,1)^2$. This aligns with the findings from our theoretical analysis presented in Theorem $\ref{theorem2.15}$. Specifically, for values of $k$ equal to 1 and 2, we anticipate the convergence orders are $\mathcal{O}(h^{1.8})$ and $\mathcal{O}(h^{3.8})$ from Theorem $\ref{theorem2.9}$, respectively. It's worth noting that for the square domain, Tables $\ref{table5.1}$ and $\ref{table5.2}$ indicate that all the observed convergence orders are optimal for both $k=1$ and $k=2$.

\subsection{L-shaped domain}
We select different settings on the L-shaped domain $\Omega=(0,1)\times (0,1)\setminus[\frac{1}{2},1]\times [\frac{1}{2},1]$. The parameter $\gamma(h)$ in $(\ref{2.4})$ is selected as $h^{0.1}$, $h^{0.2}$ or $-1/\log(h)$, respectively. The degree $k$ of the polynomial space is selected as 1 or 5, respectively. The numerical results can be found in Tables $\ref{table5.5}$ - $\ref{table5.7}$, and the figures of the first four eigenfunctions are presented in Figs $\ref{figure5.5}$ - $\ref{figure5.8}$.

\begin{table}[htbp]
\centering
 \caption{$\Omega=(0,1)\times (0,1)\setminus[\frac{1}{2},1]\times [\frac{1}{2},1]$, $k=5$, $\gamma(h)=h^{0.1}$.}
  \label{table5.5}
 \renewcommand\arraystretch{1}
\begin{tabular}{|c | c |c | c |c |c|}
 \hline
$h$ & 1/8 & 1/16 & 1/32 & 1/64 & Trend \\[0ex] 
 \hline\hline
 $\lambda_{1,h}$ &  \makecell[c]{0.1829642327}   &  \makecell[c]{0.1829642362}    & \makecell[c]{0.1829642369}  &  \makecell[c]{0.1829642374} & $\nearrow$\\
 \hline
 $\lambda_{2,h}$ & \makecell[c]{0.8931819640}  &   \makecell[c]{0.8934620312}   &  \makecell[c]{0.8935729025}  &  \makecell[c]{0.8936168669} & $\nearrow$\\
 \hline
  $\lambda_{3,h}$ & \makecell[c]{1.688598908} &   \makecell[c]{1.688600231}   &  \makecell[c]{1.688600439}  &   \makecell[c]{1.688600472} & $\nearrow$\\
 \hline
  $\lambda_{4,h}$ & \makecell[c]{3.217859767}  &   \makecell[c]{3.217859784}    & \makecell[c]{3.217859787}  &   \makecell[c]{3.217859788} & $\nearrow$\\
 \hline
\end{tabular}
\end{table}

\begin{table}[htbp]
\centering
\caption{$\Omega=(0,1)\times (0,1)\setminus[\frac{1}{2},1]\times [\frac{1}{2},1]$, $k=5$, $\gamma(h)=h^{0.2}$.}
 \label{table5.6}
 \renewcommand\arraystretch{1}
\begin{tabular}{|c | c |c | c |c |c|}
 \hline
$h$ & 1/8 & 1/16 & 1/32 & 1/64 & Trend  \\ [0ex] 
 \hline\hline
 $\lambda_{1,h}$ &  \makecell[c]{0.1829642326}  &   \makecell[c]{0.1829642362}    & \makecell[c]{0.1829642369}   &  \makecell[c]{0.1829642374} & $\nearrow$\\
 \hline
 $\lambda_{2,h}$ & \makecell[c]{0.8931807937}   &  \makecell[c]{0.8934614445}    & \makecell[c]{0.8935726320}  &   \makecell[c]{0.8936167483} & $\nearrow$\\
 \hline
  $\lambda_{3,h}$ & \makecell[c]{1.688598898}  &   \makecell[c]{1.688600229}    &  \makecell[c]{1.688600438}   &  \makecell[c]{1.688600471} & $\nearrow$\\
 \hline
  $\lambda_{4,h}$ & \makecell[c]{2.779233802} &    \makecell[c]{3.217859784}   &  \makecell[c]{3.217859787}  &   \makecell[c]{3.217859788} & $\nearrow$\\
 \hline
\end{tabular}
\end{table}

\begin{table}[htbp]
\centering
\caption{$\Omega=(0,1)\times (0,1)\setminus[\frac{1}{2},1]\times [\frac{1}{2},1]$, $k=1$, $\gamma(h)=-1/\log(h)$.}
 \label{table5.7}
 \renewcommand\arraystretch{1}
\begin{tabular}{|c | c |c | c |c |c|}
 \hline
$h$ & 1/8 & 1/16 & 1/32 & 1/64 & Trend  \\[0ex] 
 \hline\hline
 $\lambda_{1,h}$ &  \makecell[c]{0.1819814683}  &   \makecell[c]{0.1826153945}    & \makecell[c]{0.1828514041}   &  \makecell[c]{0.1829296478} & $\nearrow$\\
 \hline
 $\lambda_{2,h}$ & \makecell[c]{0.8640089893}   &  \makecell[c]{0.8828364959}    & \makecell[c]{0.8896347423}  &   \makecell[c]{0.8921265258} & $\nearrow$\\
 \hline
  $\lambda_{3,h}$ & \makecell[c]{1.627317546}  &   \makecell[c]{1.679807357}    &  \makecell[c]{1.686909980}   &  \makecell[c]{1.688227347} & $\nearrow$\\
 \hline
  $\lambda_{4,h}$ & \makecell[c]{1.740316947} &    \makecell[c]{2.508500102}   &  \makecell[c]{3.199957769}  &   \makecell[c]{3.215335736} & $\nearrow$\\
 \hline
\end{tabular}
\end{table}

\begin{figure}[htbp]
\centering
\begin{minipage}[t]{0.48\textwidth}
\centering
\includegraphics[width=6cm]{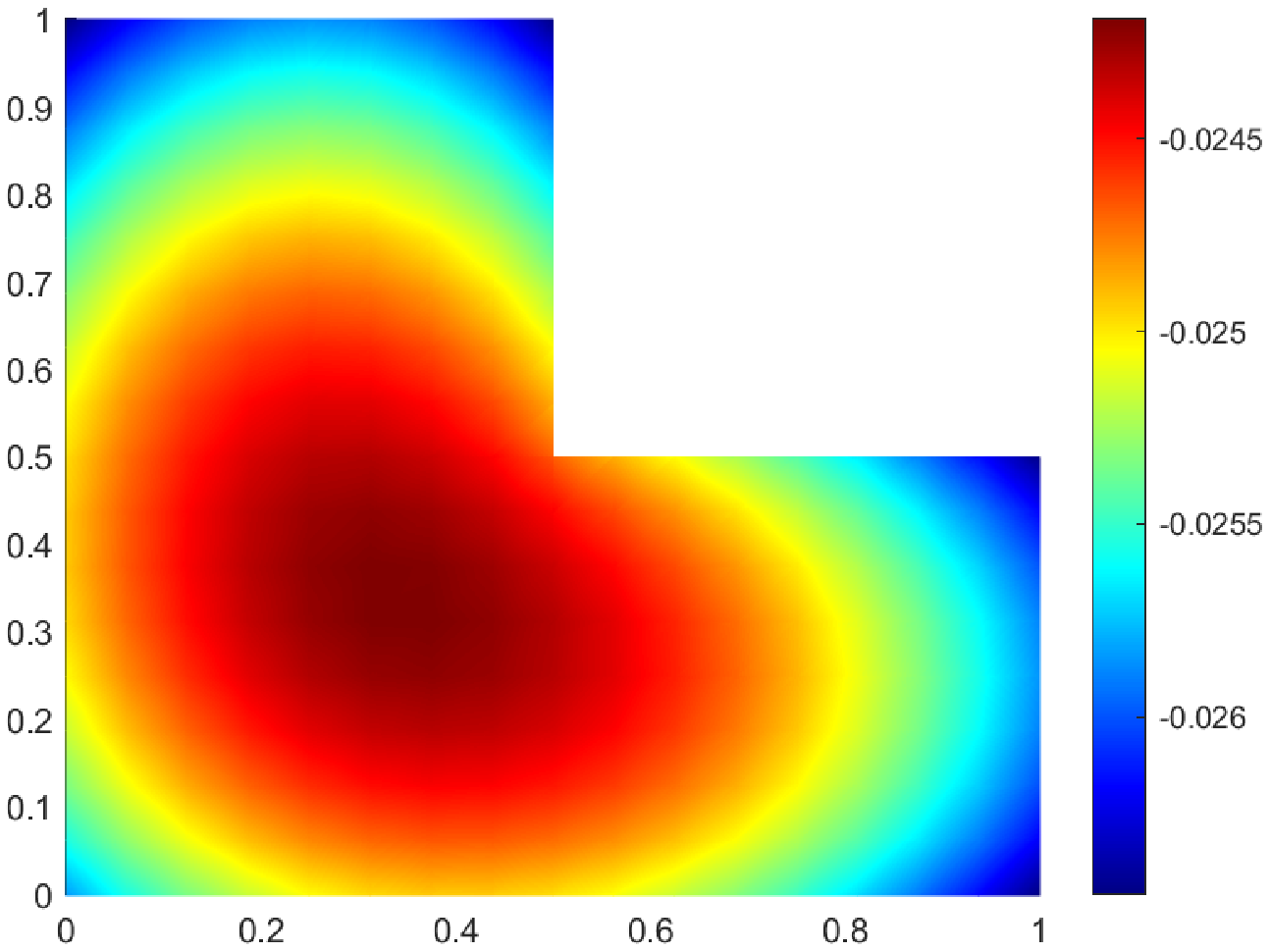}
\caption{$\Omega=(0,1)\times (0,1)\setminus[\frac{1}{2},1]\times [\frac{1}{2},1]$, $k=1$, $\gamma(h)=h^{0.1}$, $h=1/16$, the first eigenfunction.}
\label{figure5.5}
\end{minipage}
\begin{minipage}[t]{0.48\textwidth}
\centering
\includegraphics[width=6cm]{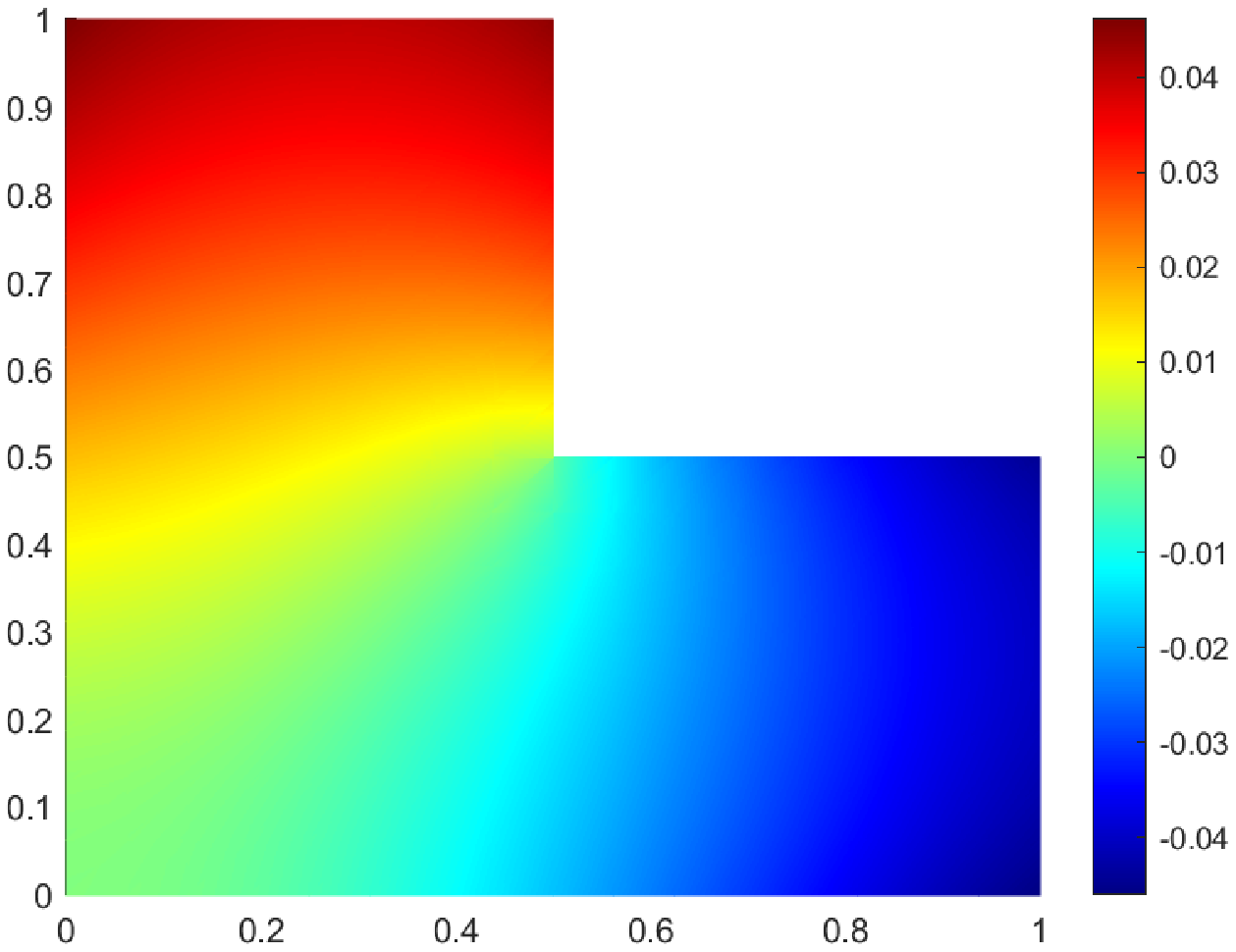}
\caption{$\Omega=(0,1)\times (0,1)\setminus[\frac{1}{2},1]\times [\frac{1}{2},1]$, $k=1$, $\gamma(h)=h^{0.1}$, $h=1/16$, the second eigenfunction.}
\label{figure5.6}
\end{minipage}
\begin{minipage}[t]{0.48\textwidth}
\centering
\includegraphics[width=6cm]{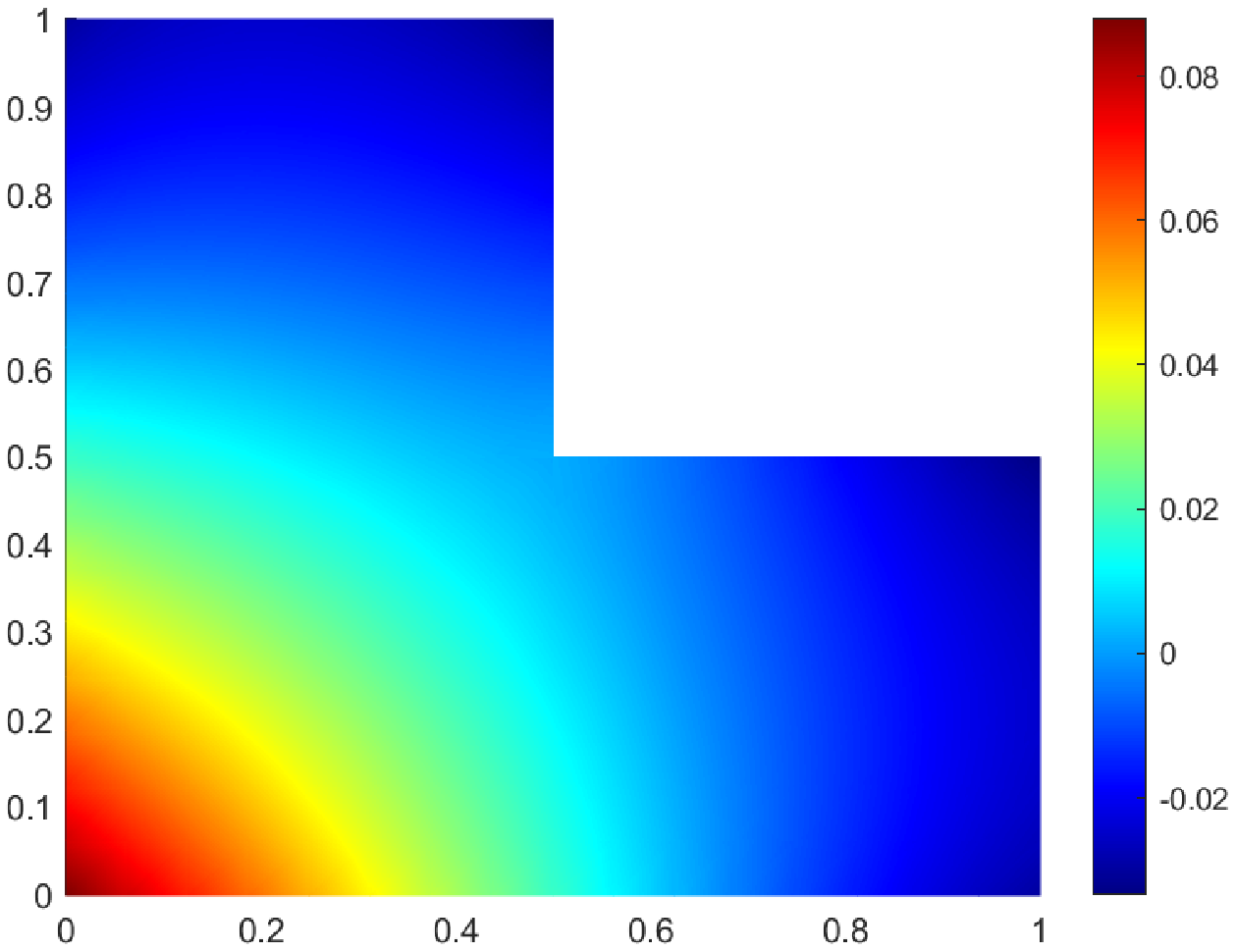}
\caption{$\Omega=(0,1)\times (0,1)\setminus[\frac{1}{2},1]\times [\frac{1}{2},1]$, $k=1$, $\gamma(h)=h^{0.1}$, $h=1/16$, the third eigenfunction.}
\label{figure5.7}
\end{minipage}
\begin{minipage}[t]{0.48\textwidth}
\centering
\includegraphics[width=6cm]{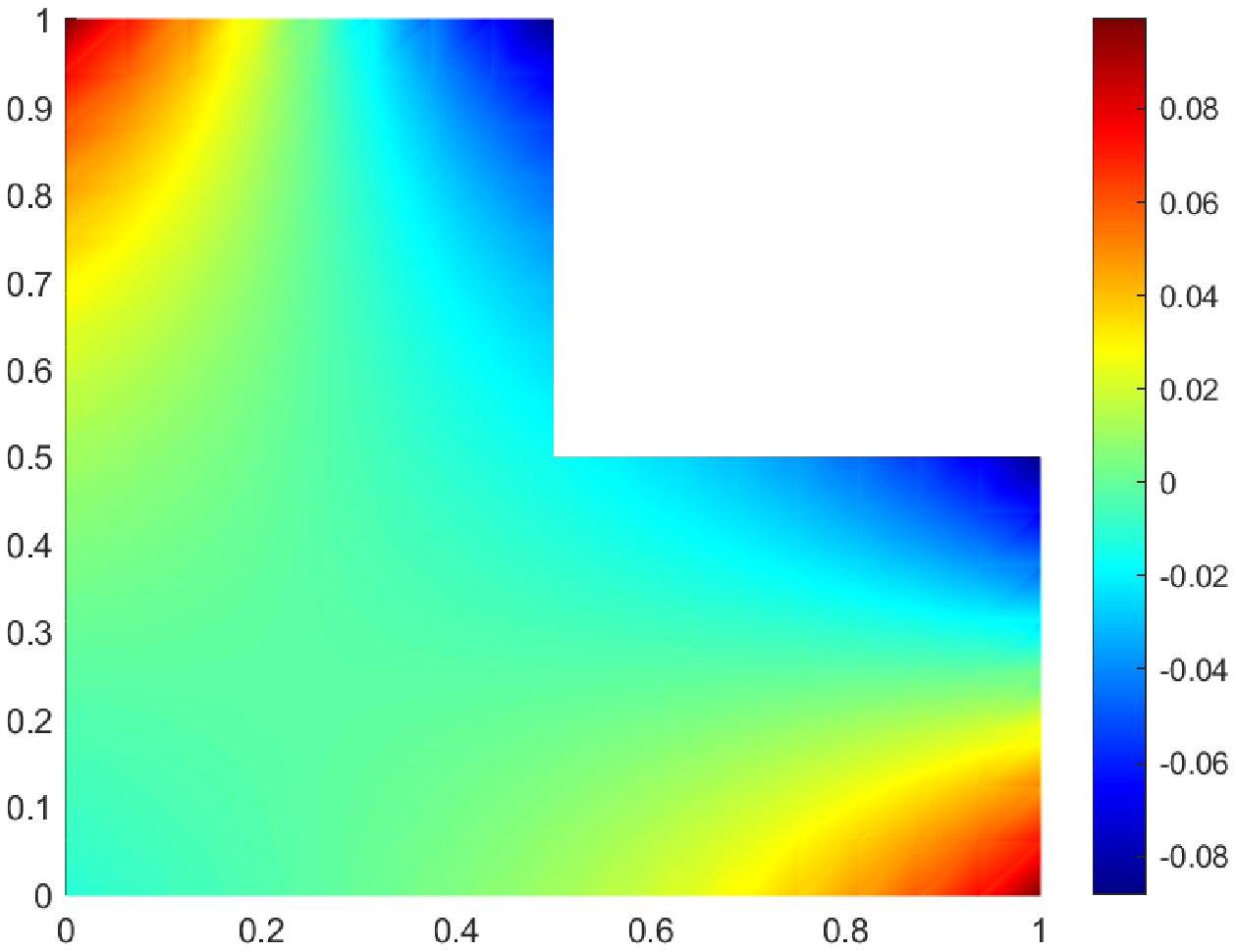}
\caption{$\Omega=(0,1)\times (0,1)\setminus[\frac{1}{2},1]\times [\frac{1}{2},1]$, $k=1$, $\gamma(h)=h^{0.1}$, $h=1/16$, the fourth eigenfunction.}
\label{figure5.8}
\end{minipage}
\end{figure}

We show the numerical results for different combinations of $k$ and $\gamma(h)$ in Tables \ref{table5.5} - \ref{table5.7}, it becomes evident that the eigenvalues obtained using the WG method serve as asymptotic lower bound approximations for the exact eigenvalues of the Steklov eigenvalue problem $(\ref{2.1})$ on the L-shaped domain $\Omega=(0,1)\times (0,1)\setminus[\frac{1}{2},1]\times [\frac{1}{2},1]$. This observation is consistent with the theoretical analysis provided in Theorem \ref{theorem2.15}. This shows the accuracy of our theoretical analysis.

\bibliographystyle{siam}
\bibliography{library}

\end{document}